\documentclass{amsart}
\usepackage{amssymb,amsthm,url}
\newtheorem{theorem}{Theorem}[section]
\newtheorem{lemma}[theorem]{Lemma}

\newtheorem{definition}[theorem]{Definition}
\newtheorem{notation}[theorem]{Notation}

\frenchspacing
\newcommand{\lcm}{\mathop{\mathrm{lcm}}}
\newcommand{\Out}{\mathop{\textrm{Out}}}
\newcommand{\GL}{\mathop{\textrm{GL}}}

\newcommand{\PSL}{\mathop{\textrm{PSL}}}
\newcommand{\PSU}{\mathop{\textrm{PSU}}}
\newcommand{\Sp}{\mathop{\mathrm{Sp}}}
\newcommand{\PSp}{\mathop{\mathrm{PSp}}}
\def\cent#1#2{{\bf C}_{{#1}}({{#2}})}
\newcommand{\Aut}{\mathop{\mathrm{Aut}}}

\begin{document}

\title[Maximal element order of symplectic groups]{The maximum order of the elements  of a finite symplectic group of even characteristic}
\author[P. Spiga]{Pablo Spiga}
\address{Pablo Spiga, Dipartimento di Matematica Pura e Applicata,\newline
 University of Milano-Bicocca, Via Cozzi 53, 20126 Milano, Italy} 
\email{pablo.spiga@unimib.it}
\subjclass[2010]{Primary 20H30; Secondary 20D60}

\keywords{maximal orders, classical groups, symplectic group}

\begin{abstract}
We give an exact formula, as a function of $m$ and $q$, for the maximum order of the elements of the finite symplectic group $\Sp_{2m}(q)$, with $q$ even, and of its automorphism group.
\end{abstract}
\maketitle 

\section{Introduction}

The maximum order of the elements of a finite simple group of Lie type of odd characteristic was computed by Kantor and Seress in~\cite{KS}. Their motivation is computational: some algorithms for computations with a matrix group $G$ require the characteristic of $G$ as an input. There are polynomial time algorithms for computing the characteristic of $G$, but these are often not practical, see~\cite{KS2} or the introduction in~\cite{KS}. So, Kantor and Seress provide in~\cite{KS} an alternative polynomial type algorithm for computing the characteristic of $G$. This algorithm relies on~\cite[Theorem~$1.2$]{KS}, which states that, for a simple group of Lie type $G$ of odd characteristic $p$,  the three largest element orders of $G$ determine uniquely $p$. For more details and for an algorithmic implementation of these results we refer to~\cite{KS}.

The hypothesis of $p$ being odd is essential only for simple classical groups. In fact, for these groups some delicate computations on the order of semisimple elements in maximal tori heavily depend upon this requirement. Section~$2$ in~\cite{KS} describes in details the obstacles for pinning down an exact formula for the maximum order of the elements of a simple classical group of even characteristic. 

In~\cite{DGPS}, as part of a rather different investigation, the authors have determined exact formulae, in any characteristic, for the maximal order of an element in almost simple groups with socle $\PSL_n(q)$ (see Corollary~$2.7$ and Theorem~$2.16$) and $\PSU_n(q)$ (see Lemma~$2.15$ and Theorem~$2.16$). 

In this paper we study the finite symplectic groups $\Sp_{2m}(q)$, with $q$ even. Observe that, for $q$ even, $\Sp_{2m}(q)=\PSp_{2m}(q)$ is simple for $(m,q)\notin\{ (1,2),(2,2)\}$.

\begin{theorem}\label{thrm}
Let $m\geq 1$ and let $q$ be a power of $2$. The maximum order of the  elements of $\Sp_{2m}(q)$ is $M_m(q)$, where $M_m(q)$ is given in Definition~\ref{def}. Moreover, the maximum order of the elements of $\Aut(\Sp_{2m}(q))$ is $6$ if $(m,q)=(1,4)$, $10$ if $(m,q)=(2,2)$, $20$ if $(m,q)=(2,4)$, and $M_m(q)$ otherwise.
\end{theorem}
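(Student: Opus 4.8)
The plan is to establish the two assertions of Theorem~\ref{thrm} in turn, reducing each to a combinatorial optimisation over ``signed partitions'' of $m$.

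\smallskip
\noindent\textbf{Element orders in $\Sp_{2m}(q)$.} Write $q=2^f$. If $g=su=us$ is the Jordan decomposition of $g\in\Sp_{2m}(q)$ then $|s|$ is odd and $|u|$ is a power of $2$, so $|g|=|s|\,|u|$ and it suffices to maximise $|s|\,|u|$ over commuting pairs $(s,u)$ with $s$ semisimple and $u$ unipotent. I would first record, from the action on the natural module $V=\mathbb{F}_q^{2m}$, the structure of $\cent{\Sp_{2m}(q)}{s}$: grouping the eigenvalues of $s$ into Frobenius orbits and pairing the orbit of $\lambda$ with that of $\lambda^{-1}$ yields an orthogonal decomposition of $V$ into nondegenerate $s$-invariant summands, whence
\[
\cent{\Sp_{2m}(q)}{s}\cong \textstyle\prod_i\GL_{a_i}(q^{b_i})\times\prod_j\mathrm{GU}_{c_j}(q^{d_j})\times\Sp_{2n}(q),\qquad \textstyle\sum_i a_ib_i+\sum_j c_jd_j+n=m,
\]
the last factor arising from the $1$-eigenspace (the only self-inverse eigenvalue in even characteristic). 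On such a group the largest order of a semisimple element is the least common multiple of the numbers $q^{b_i}-1$ and $q^{d_j}+1$ (choose the relevant eigenvalues to be field generators), while the largest order of a unipotent element is $2^{\lceil\log_2 N\rceil}$ with $N=\max(\max_i a_i,\max_j c_j,2n)$ (realised by a single Jordan block). A short argument shows it is never advantageous to take some $a_i\ge 2$ or $c_j\ge 2$: replacing such a factor by a rank-one factor of the same maximal torus and absorbing the freed dimensions into the $\Sp$-summand does not decrease the optimum. Hence the problem reduces to choosing a partition $m=\sum_i b_i+\sum_j d_j+n$ with the parts $b_i$, $d_j$ labelled by $+$, $-$ and a distinguished remainder $n\ge 0$, and maximising
\[
\lcm\!\Big(\{q^{b_i}-1\}_i,\ \{q^{d_j}+1\}_j,\ 2^{\lceil\log_2(2n)\rceil}\Big)
\]
(the last entry being $1$ if $n=0$); by definition this maximum is $M_m(q)$.

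\smallskip
\noindent\textbf{The optimisation --- the main obstacle.} I expect the bulk of the work, and the principal difficulty, to lie in showing that the displayed maximum is realised by one of a short explicit list of configurations and in comparing them. Two effects compete: longer parts give larger individual factors $q^b\mp1$, but combining two of them into their product inside the $\lcm$ forces the corresponding indices to be almost coprime, and one must weigh losing one or two dimensions against the factor $2$ or $4$ gained from a small symplectic remainder. This needs the standard estimates for $\gcd(q^a\pm1,q^b\pm1)$ together with a finite but intricate comparison --- precisely the delicate bookkeeping in even characteristic singled out in \S2 of~\cite{KS}. I would dispose of the small cases separately, using $\Sp_2(2)\cong\mathrm{Sym}(3)$, $\Sp_2(4)\cong\Alt(5)$ and $\Sp_4(2)\cong\mathrm{Sym}(6)$.

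\smallskip
\noindent\textbf{The automorphism group.} As $q$ is even there are no diagonal automorphisms, so $\Aut(\Sp_{2m}(q))=\Sp_{2m}(q)\rtimes C_f$ for $m\neq 2$, while the exceptional graph automorphism gives $\Aut(\Sp_4(q))=\Sp_4(q)\rtimes C_{2f}$. Inner elements are covered by the first part, so let $x$ lie outside $\Sp_{2m}(q)$ with image of order $e>1$ in $\Out$. If that image is a field automorphism $\sigma$ of order $e$, then $\langle x\rangle\cap\Sp_{2m}(q)=\langle x^e\rangle$ has index $e$ in $\langle x\rangle$, while $x^e\in\Sp_{2m}(q)$ has $\sigma$-stable conjugacy class and so (by a rational-canonical-form argument) is conjugate into the subfield subgroup $\Sp_{2m}(q^{1/e})$; hence $|x|=e\,|x^e|\le e\cdot M_m(q^{1/e})$. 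If $m=2$ and the image is a graph--field automorphism, one uses instead that its fixed-point subgroup in $\Sp_4(q)$ is a Suzuki group $\mathrm{Sz}(q_0)$ or a symplectic group $\Sp_4(q_0)$ over an appropriate proper subfield, which yields an even smaller bound. A crude estimate ($M_m(q)$ has order of magnitude $q^m$, whereas $e\cdot M_m(q^{1/e})$ has order $e\,q^{m/e}$ and the Suzuki maximal orders have order $q_0<q$) then shows that every such $x$ has order strictly below $M_m(q)$ unless $(m,q)\in\{(1,4),(2,2),(2,4)\}$. I would settle these three exceptions directly: $\Aut(\Sp_2(4))=\mathrm{Sym}(5)$ has maximal element order $6$; $\Aut(\Sp_4(2))=\Aut(\mathrm{Sym}(6))$ has maximal element order $10$; and a direct analysis of the graph--field coset in $\Sp_4(4)\rtimes C_4$ gives maximal element order $20$.
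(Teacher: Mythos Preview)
Your plan is essentially the paper's: reduce the inner case to the optimisation $\max_{m',\wp}L_{m,q}(m',\wp)$ over signed partitions (the paper's Theorem~\ref{thrm2}), and bound outer elements by conjugating $x^e$ into a subfield subgroup. Two caveats are worth flagging. First, the sentence ``by definition this maximum is $M_m(q)$'' is wrong as stated: $M_m(q)$ is the explicit piecewise expression of Definition~\ref{def}, and showing that the displayed optimum equals that expression is precisely the content of the paper's Theorem~\ref{thrm2}, whose proof fills all of Section~\ref{sec4} (eleven claims plus the preparatory Lemma~\ref{babylonians}); you acknowledge this in your next paragraph, so the slip is only in the wording, but it should be corrected. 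Second, two technical points deserve more care than you give them: the reduction to $a_i=c_j=1$ is not quite a ``short argument'' when $q=2$, since $(q^{ab}\pm1)/(q^a\pm1)\ge 2^{\lceil\log_2 b\rceil}$ fails for $(q,a,b)\in\{(2,1,2),(2,1,3)\}$ and those cases need separate treatment (the paper's Claims~1--2); and the conjugation of $x^e$ into $\Sp_{2m}(q^{1/e})$ is carried out in the paper via Lang's theorem rather than a rational-canonical-form argument, which in the symplectic case would itself amount to invoking Lang--Steinberg.
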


The definition of $M_m(q)$ is rather cumbersome compared to the odd characteristic case (see~\cite[Table~A.3]{KS}), however we show in Lemma~\ref{estimate} that $q^m<M_m(q)\leq (q^{m+1}-1)/(q-1)$, which might be useful for some  practical purposes.

\begin{definition}\label{def}{\rm Let $m\geq 1$ and let $q$ be a power of $2$. The value of the function $M_m$ on $q$ depends on the parity of $m$ and on whether $q=2$ or $q>2$.  We start with the case that $q>2$. When $m$ is odd, let  $m=2^{i_1}+2^{i_2}+\cdots +2^{i_\ell}$ be the $2$-adic expansion of $m$ and define $$M_{m}(q)=\prod_{j=1}^{\ell}(q^{2^{i_j}}+1).\qquad (m\textrm{ odd and }q>2)$$
Observe that when $m=1+2+\cdots +2^{\ell-1}$ the product $M_m(q)$ is $(q^{2^\ell}-1)/(q-1)$, see~\eqref{eq0} below.

When $m$ is even and $m\geq 4$, let $\ell$ be the largest positive integer with $2^\ell+2^{\ell-1}\leq m$ and define
\[
M_{m}(q)=
\left\{
\begin{array}{lcl}
q^2+1&&\textrm{if }m=2 \textrm{ and }q>2,\\ 
(q^{m-2^\ell+1}-1)(q^{2^{\ell}}-1)/(q-1)&&\textrm{if }m\geq 4\textrm{ is even and }q>2.
\end{array}
\right.
\]

We now turn to the definition of $M_{m}(q)$, for $q=2$. Let $\ell$ be the largest positive integer with $2^\ell-1\leq m$ and write $m_0=m-(2^\ell-1)$. Now, define
\begin{equation}\label{eqdef}
M_{m}(q)=\left\{
\begin{array}{lcl}
q^{m_0}(q^{2^\ell}-1)&&\textrm{if }m_0\leq 3,\\
(q^{2^{\ell-1}+m_0}-1)(q^{2^{\ell-1}}-1)&&\textrm{if }3<m_0<2^{\ell-1}, m_0 \textrm{ odd},\\
q(q^{2^{\ell-1}+m_0-1}-1)(q^{2^{\ell-1}}-1)&&\textrm{if }3<m_0<2^{\ell-1}, m_0 \textrm{ even},\\
(q^{2^{\ell}}+1)(q^{2^{\ell-1}}-1)&&\textrm{if }3<m_0, m_0=2^{\ell-1},\\
(q^{m_0}-1)(q^{2^{\ell}}-1)&&\textrm{if }m_0>\max(3,2^{\ell-1}), m_0 \textrm{ odd},\\
q(q^{m_0-1}-1)(q^{2^{\ell}}-1)&&\textrm{if }m_0>\max(3,2^{\ell-1}), m_0 \textrm{ even}.\\
\end{array}
\right.
\end{equation}

In order to get acquainted with this definition we have tabulated $M_{m}(q)$ in Table~\ref{table}, for $m\leq 20$.
}
\end{definition}

\begin{table}[!!h]
\begin{tabular}{|c|cc|}\hline
   $m$     &$q>2$&$q=2$\\\hline
$1$&$(q^2-1)/(q-1)$&$q^2-1$\\
$2$&$q^2+1$&$q(q^2-1)$\\
$3$&$(q^4-1)/(q-1)$&$q^4-1$\\
$4$&$(q^3-1)(q^2-1)/(q-1)$&$q(q^4-1)$\\
$5$&$(q+1)(q^4+1)$&$q^2(q^4-1)$\\
$6$&$(q^3-1)(q^4-1)/(q-1)$&$q^3(q^4-1)$\\
$7$&$(q^8-1)/(q-1)$&$q^8-1$\\
$8$&$(q^5-1)(q^4-1)/(q-1)$&$q(q^8-1)$\\
$9$&$(q+1)(q^8+1)$&$q^2(q^8-1)$\\
$10$&$(q^7-1)(q^4-1)/(q-1)$&$q^3(q^8-1)$\\
$11$&$(q+1)(q^2+1)(q^8+1)$&$(q^8+1)(q^4-1)$\\
$12$&$(q^5-1)(q^8-1)/(q-1)$&$(q^5-1)(q^8-1)$\\
$13$&$(q+1)(q^4+1)(q^8+1)$&$q(q^5-1)(q^8-1)$\\
$14$&$(q^7-1)(q^8-1)/(q-1)$&$(q^7-1)(q^8-1)$\\
$15$&$(q^{16}-1)/(q-1)$&$q^{16}-1$\\
$16$&$(q^9-1)(q^8-1)/(q-1)$&$q(q^{16}-1)$\\
$17$&$(q+1)(q^{16}+1)$&$q^2(q^{16}-1)$\\
$18$&$(q^{11}-1)(q^8-1)/(q-1)$&$q^3(q^{16}-1)$\\
$19$&$(q+1)(q^2+1)(q^{16}+1)$&$q(q^{11}-1)(q^{8}-1)$\\
$20$&$(q^{13}-1)(q^8-1)/(q-1)$&$(q^{13}-1)(q^8-1)$\\\hline
\end{tabular}
\caption{$M_{m}(q)$, for $m\leq 20$}\label{table}
\end{table}
For the rest of this paper we let $m$ denote a positive integer and we let $q\geq 2$ denote a power of $2$. Moreover, for a finite group $G$ and $g\in G$, we let $|g|$ denote the order of $g$.

\subsection{Structure of the paper}\label{structure} The proof of Theorem~\ref{thrm} is based on a number-theoretic theorem on partitions (Theorem~\ref{thrm2}). In Section~\ref{sec2}, we state Theorem~\ref{thrm2} and, for not breaking the flow of the argument, we prove 
Theorem~\ref{thrm}.  We postpone the proof of Theorem~\ref{thrm2} until Section~\ref{sec4}.

\section{Theorem~\ref{thrm2} and the proof of Theorem~\ref{thrm}}\label{sec2}

Before stating Theorem~\ref{thrm2} we need to introduce some notation.
\begin{notation}\label{not}{\rm For us, a partition  of $m$ of length $\ell$ is an  $\ell$-tuple $(d_1,\ldots,d_\ell)$ of positive integers with $m=d_1+\cdots+d_\ell$. (We consider the empty tuple to be a partition of $0$.)  Moreover, a signed partition of $m$ is a symbol $\wp=(d_1^{\varepsilon_1},\ldots,d_\ell^{\varepsilon_\ell})$ with $(d_1,\ldots,d_\ell)$  a partition of $m$ of length $\ell$ and $\varepsilon_i\in \{-1,1\}$, for each $i\in \{1,\ldots,\ell\}$. We refer to $d_i^{\varepsilon_i}$ as a \textrm{part} of $\wp$ and to $\varepsilon_i$ as the \textrm{sign} of $d_i$. (Our definition of signed partition is unrelated to the definition introduced by Andrews~\cite[page~$567$]{Andrews}. We find this name quite descriptive of the fact that each part of the partition is equipped with a sign, and hence we feel at liberty to borrow this term.)

Let ${m'}\in \{0,\ldots,m\}$ and let $\wp=(d_1^{\varepsilon_1},\ldots,d_\ell^{\varepsilon_\ell})$ be a signed partition of $m-{m'}$. We write
$$L_{m,q}({m'},\wp)=2^{\lceil \log_2(2{m'})\rceil}\lcm(q^{d_1}-\varepsilon_1,q^{d_2}-\varepsilon_2,\ldots,q^{d_\ell}-\varepsilon_\ell).$$
(Here,  ``$\lcm$'' stands for the least common multiple, and we set $2^{\lceil \log_2(2m')\rceil}=1$ when $m'=0$.) We find it helpful to think of $L_{m,q}$ as a function of ${m'}$ and $\wp$.}
\end{notation}

The key ingredient in our proof of Theorem~\ref{thrm} is the following.  

\begin{theorem}\label{thrm2}$M_m(q)=\max(L_{m,q}({m'},\wp)\mid {m'},\wp)$.
\end{theorem}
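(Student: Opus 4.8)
The plan is to reduce the statement to a finite, case‑by‑case comparison governed entirely by the $2$‑adic structure of $m$, and then to carry out that comparison in parallel with the definition of $M_m(q)$. The right‑hand side $\max(L_{m,q}(m',\wp))$ ranges over a choice of a ``unipotent part'' of dimension $2m'$ (contributing the factor $2^{\lceil\log_2(2m')\rceil}$, the largest order of a unipotent element of $\Sp_{2m'}(q)$) together with a signed partition $\wp$ of $m-m'$ describing a semisimple torus, whose contribution is the $\lcm$ of the $q^{d_i}-\varepsilon_i$. So I would first establish the two inequalities separately. For $M_m(q)\le \max L_{m,q}$: exhibit, for each value of $m$ and each parity/$q=2$ case, an explicit pair $(m',\wp)$ whose $L$‑value equals the stated $M_m(q)$. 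This is a direct matching against Definition~\ref{def} — e.g.\ for $m$ odd and $q>2$ one takes $m'=0$ and the signed partition $\wp=(2^{i_1\,-1},\dots,2^{i_\ell\,-1})$ (so each $q^{2^{i_j}}+1$ appears), using that these numbers are pairwise coprime; for $q=2$, $m_0\le 3$ one takes $m'=m_0$ and $\wp$ the single part of size $2^\ell$ with sign $-1$; and so on through the six $q=2$ subcases.

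**The hard direction.** The substantial work is the reverse inequality $L_{m,q}(m',\wp)\le M_m(q)$ for \emph{every} admissible $(m',\wp)$. Here I expect the main obstacle to lie. The strategy is: (i) reduce to the case where the parts $d_i$ of $\wp$ are pairwise distinct and, after combining parts with the same sign whose $q^{d}-\varepsilon$ share common factors, to a ``reduced'' normal form where $\lcm = \prod_i (q^{d_i}-\varepsilon_i)$ — this uses elementary facts about $\gcd(q^a-1,q^b-1)=q^{\gcd(a,b)}-1$ and the analogous identities involving $q^a+1$ (the relevant lemma being that $q^a+1$ and $q^b+1$ are coprime unless $a/\gcd(a,b)$ and $b/\gcd(a,b)$ are both odd, etc.). (ii) Bound the unipotent factor: $2^{\lceil\log_2(2m')\rceil}< 2^{m'+1}\le$ a product of $q$'s whose exponents sum to $m'$; more precisely one wants $2^{\lceil\log_2(2m')\rceil}\le q\cdot(q^{m'-1}-1)$ or similar, so that the unipotent contribution is never more efficient, per unit of dimension consumed, than enlarging a semisimple part. (iii) Phrase the whole thing as an optimization: among all ways to write $m = m' + \sum d_i$ with signs, maximize $\log_q$ of the product; show the maximizer has the shape dictated by Definition~\ref{def}. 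The governing heuristic is that $q^{d}-1\approx q^{d}$ and $q^{d}+1\approx q^d$ contribute roughly $d$ to $\log_q$, but the \emph{lcm constraint} forces the $d_i$ to be essentially the distinct powers of $2$ summing to $m$ — hence the appearance of the $2$‑adic expansion — while a ``$+1$'' is only available (and coprimality only holds) on parts that are powers of $2$, which is exactly why the even‑$m$ and $q=2$ cases lose a little and acquire the extra $(q^{\text{something}}-1)/(q-1)$ or leading‑$q$ factors.

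**Organizing the comparison.** Concretely, I would fix $q$ and split on $q>2$ versus $q=2$ (the latter because small $q$ makes $2^{\lceil\log_2 2m'\rceil}$ comparatively large, so unipotent parts become competitive — this is precisely the source of the $q^{m_0}$ prefactors in \eqref{eqdef}), and within each, split on the parity of $m$ and on $m_0$ versus $2^{\ell-1}$ exactly as in the Definition. For each branch the argument is: any competing $(m',\wp)$ either has $m'=0$, in which case it's a pure lcm of $q^{d_i}\pm1$ and one shows $\prod(q^{d_i}-\varepsilon_i)\le M_m(q)$ by a greedy/exchange argument on the partition (replacing two parts by their sum, or splitting off a power of $2$, never decreases the bound beyond $M_m(q)$); or $m'>0$, in which case one shows the unipotent dimension is best spent as a single block and $2^{\lceil\log_2 2m'\rceil}\cdot(\text{best lcm on }m-m')\le M_m(q)$, again by an exchange argument comparing against moving that dimension into the semisimple part. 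The estimate $q^m<M_m(q)\le(q^{m+1}-1)/(q-1)$ promised after the theorem is a useful sanity bookkeeping device throughout, and I would prove it (Lemma~\ref{estimate}) first so it can be invoked as a crude upper bound to kill most non‑optimal partitions immediately. I expect the proof to run several pages of careful but elementary case analysis, with the combinatorial exchange lemma on signed partitions as its technical heart; this is deferred to Section~\ref{sec4} precisely because it is self‑contained number theory independent of the group‑theoretic input already used to set up the correspondence $M_m(q)=\max L_{m,q}(m',\wp)$.
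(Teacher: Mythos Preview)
Your strategy is correct and mirrors the paper's proof: the same two directions, the same reduction to pairwise-coprime parts via the $\gcd$ identities (Lemma~\ref{basic}), the same induction-plus-Lemma~\ref{estimate} argument to bound $m'$ (yielding $m'=0$ for $q>2$ and $m'\le 3$ for $q=2$), and the same case split on $q$ and on the sign pattern $(I_+,I_-)$. The one ingredient you leave unspecified --- the ``combinatorial exchange lemma on signed partitions'' you flag as the technical heart --- is the paper's Lemma~\ref{babylonians}: if $(d_1)_2,\dots,(d_\ell)_2$ are pairwise distinct then $\prod_i(q^{d_i}+1)\le\prod_j(q^{2^{x_j}}+1)$, where $2^{x_1}+\cdots+2^{x_t}$ is the $2$-adic expansion of $\sum d_i$; this is precisely what lets one replace the parts with sign $-1$ by powers of $2$ and reduce to a finite list of normal forms, after which the remaining eleven claims are the routine case analysis you anticipate.
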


We postpone the proof of Theorem~\ref{thrm2} to Section~\ref{sec4}. Here we just give a rough estimate on the order of magnitude of $M_m(q)$.  
Given $\ell\geq 1$, we have 
\begin{equation}\label{eq0}
\prod_{i=0}^{\ell-1}(q^{2^i}+1)=\frac{q^{2^{\ell}}-1}{q-1}.
\end{equation}
(Expanding the product on the left hand side we get $q^{2^{\ell-1}}+q^{2^{\ell-1}-1}+\cdots +q+1$, which equals the right hand side.) 

\begin{lemma}\label{estimate}We have $q^m< M_{m}(q)\leq (q^{m+1}-1)/(q-1)$. Moreover, if $m\neq 2$ or if $q=2$, then $M_m(q)>q^{m+2}/(q^2-1)$.
\end{lemma}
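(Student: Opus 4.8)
The plan is to work directly with the explicit formulas for $M_m(q)$ given in Definition~\ref{def}, splitting into the three regimes (namely $q>2$ with $m$ odd, $q>2$ with $m$ even, and $q=2$), and in each regime comparing the displayed product against the three target bounds $q^m$, $(q^{m+1}-1)/(q-1)$, and $q^{m+2}/(q^2-1)$. The first step is to record the two elementary inequalities $q^d-1<q^d<q^d+1$ and, crucially, $q^d+1\le q^{d+1}/(q-1)$ for $q\ge 2$ (equivalently $q^{d+1}-q^d\ge q^d+... $, i.e. $(q-1)q^d\le q^{d+1}-(q-1)$? — more simply $q^{d+1}-(q-1)(q^d+1)=q^d-q+1\ge1>0$), together with $q^d-1\ge q^{d-1}$ when $d\ge1$; these are the only facts about the individual factors that get used. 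Also useful is the telescoping identity~\eqref{eq0} and the fact that every formula for $M_m(q)$ is, after expanding, a polynomial in $q$ of degree exactly $m$ with leading coefficient $1$ (this is visible in Table~\ref{table} and is the reason $q^m$ is roughly the right size), so the content of the lemma is really about the lower-order terms.

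Next I would dispatch the \emph{upper} bound $M_m(q)\le(q^{m+1}-1)/(q-1)=q^m+q^{m-1}+\cdots+1$. For $m$ odd and $q>2$, $M_m(q)=\prod_j(q^{2^{i_j}}+1)$; since the exponents $2^{i_j}$ are the parts of the $2$-adic expansion of $m$, this product is dominated termwise by $\prod_{i=0}^{\lceil\log_2 m\rceil}(q^{2^i}+1)=(q^{2^{\lceil\log_2 m\rceil+1}}-1)/(q-1)$, but that over-shoots, so instead I would argue that multiplying out $\prod_j(q^{2^{i_j}}+1)$ gives exactly $\sum_{S}q^{\sum_{j\in S}2^{i_j}}$ over subsets $S$, all of whose exponents are distinct integers in $[0,m]$, hence this sum is $\le\sum_{k=0}^m q^k$. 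The even-$m$ and $q=2$ cases of the upper bound are similar but require expanding the two- or three-factor products $(q^{a}-1)(q^{b}-1)$, $q^c(q^a-1)$, etc., and checking the exponents that appear are distinct and at most $m$; here one has to be slightly careful because e.g. $(q^{m-2^\ell+1}-1)(q^{2^\ell}-1)/(q-1)$ needs the geometric-series cancellation to even be an integer, and then it equals $q^m+\cdots$ with some missing terms, so it is $<(q^{m+1}-1)/(q-1)$ strictly.

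For the \emph{lower} bounds, the cleanest route is again to expand. In each regime $M_m(q)$ is a monic degree-$m$ polynomial in $q$, so $M_m(q)>q^m$ will follow once I check the remaining coefficients are non-negative and not all zero — which is immediate from the product shapes except I must make sure no cancellation kills positivity (it does not, because each factor $q^a\pm1$ and each monomial $q^c$ has non-negative coefficients, and a product of such is such, and the $/(q-1)$ that sometimes appears is the benign telescoping one). The sharper bound $M_m(q)>q^{m+2}/(q^2-1)=q^m+q^{m-2}+q^{m-4}+\cdots$ is the delicate part and I expect it to be \textbf{the main obstacle}: it fails exactly for $m=2$, $q>2$ (there $M_2(q)=q^2+1<q^4/(q^2-1)=q^2+1+1/(q^2-1)$, explaining the hypothesis), so the proof must show that in every other case the expansion of $M_m(q)$ dominates $\sum_{k\ge0}q^{m-2k}$ coefficientwise, or at least in value. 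I would handle this by exhibiting, inside the expanded product, the monomials $q^m,q^{m-1}$ (the latter present except in the sporadic $q^2+1$ case and the $(q^{2^\ell}+1)(\cdots)$ and $(q^{2^\ell}\!+1)$-type cases, which must be checked by hand to still beat $q^{m+2}/(q^2-1)$ using the other terms), and then noting that the full product contains a ``sub-sum'' already $\ge q^m+q^{m-2}+q^{m-4}+\cdots$ coming from choosing, in each binomial factor $q^{a}-\varepsilon$, either $q^a$ or not. The genuinely fiddly subcases are the ones with an explicit $q$ or $q^{m_0}$ prefactor when $q=2$ (rows $m=2,5,6,8,\dots$ in the $q=2$ column), where $q^{m-1}$ may be absent; there I would fall back on a direct numerical comparison $2^a-1\ge 2^{a-1}+2^{a-2}+\cdots$ sharpened to capture enough low-order terms. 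Throughout, the smallest cases ($m\le 3$, and the handful of exceptional $(m,q)$) can simply be verified against Table~\ref{table} by inspection.
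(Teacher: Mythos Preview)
Your overall strategy---split into the regimes of Definition~\ref{def} and expand the explicit products---is exactly what the paper does, and your argument for the upper bound in the odd-$m$, $q>2$ case (that expanding $\prod_j(q^{2^{i_j}}+1)$ yields a $\{0,1\}$-coefficient polynomial of degree $m$, hence is at most $\sum_{k=0}^m q^k$) is precisely the paper's.

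There is, however, a concrete error in your justification of the lower bounds. You write that ``each factor $q^a\pm1$ \dots\ has non-negative coefficients, and a product of such is such,'' but $q^a-1$ has constant term $-1$, and such factors appear in every formula of Definition~\ref{def} outside the odd-$m$, $q>2$ case (for instance $(q^{m-2^\ell+1}-1)(q^{2^\ell}-1)/(q-1)$ for $m\ge4$ even and $q>2$, and every line of~\eqref{eqdef} for $q=2$). Expanding these products \emph{does} produce negative terms: already $M_4(q)=(q^3-1)(q+1)=q^4+q^3-q-1$, so $M_4(q)>q^4$ requires the (easy, but not vacuous) inequality $q^3>q+1$, not positivity of coefficients. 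The same issue recurs for the sharper bound, where your plan to ``exhibit the monomials $q^m,q^{m-1}$'' ignores that the negative contributions must be shown to be absorbed by the remaining positive ones. This is all routine to fix case by case, but the blanket positivity claim is false and cannot be used.

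For the sharper bound $M_m(q)>q^{m+2}/(q^2-1)$ in the case $m$ odd and $q>2$ (which the paper singles out as the hardest), the paper takes a cleaner multiplicative route rather than your additive one: write $M_m(q)=q^m\prod_j(1+q^{-2^{i_j}})$, keep only the two factors corresponding to $2^{i_1}=1$ and $2^{i_\ell}\le m-1$ (both present when $m>1$ is odd), and verify directly that $q^m(1+1/q)(1+1/q^{m-1})>q^{m+2}/(q^2-1)$. Your monomial-hunting approach can also be made to work here, since $q^{m-1}$ genuinely appears in the expansion (take the $1$ from the factor $q+1$), but the multiplicative bound avoids bookkeeping.
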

\begin{proof}
We start by proving the first part of the statement.
The lower bound follows by comparing $q^m$ with $M_m(q)$ as given in  Definition~\ref{def}. For instance, if $q>2$ and $m$ is odd, then $M_m(q)$  (viewed as a polynomial in $q$) has degree $m$ and has positive coefficients. Thus $M_m(q)>q^m$. The other cases are similar and we omit the details.

The upper bound follows by comparing $(q^{m+1}-1)/(q-1)=q^{m}+q^{m-1}+\cdots +q+1$ with $M_m(q)$. The only computation that is not straightforward is when $q>2$ and $m$ is odd: we discuss this case here in detail (we use the notation established in Definition~\ref{def}). As $i_1,i_2,\ldots ,i_\ell$ are pair-wise distinct, we see that $$M_m(q)=\prod_{j=1}^\ell(q^{2^{i_j}}+1)=q^m+a_{m-1}q^{m-1}+a_{m-2}q^{m-2}+\cdots +a_2q^2+a_1q+1,$$
with $a_1,\ldots,a_{m-1}\in \{0,1\}$. From this we have $M_m(q)\leq \sum_{i=0}^m q^i= (q^{m+1}-1)/(q-1)$. The other cases are similar.

Finally, the second part of the statement follows again with a case-by-case analysis comparing $M_m(q)$ with $q^{m+2}/(q^2-1)$. Each case requires only routine computations comparing the polynomials $(q^2-1)M_m(q)$ and $q^{m+2}$. Here we only prove it when $m$ is odd and $q>2$, which we regard it as the hardest case. So, let $2^{i_1}+\cdots +2^{i_\ell}$ be the $2$-adic expansion of $m$ with $i_1<\cdots <i_\ell$. If $m=1$, then with an easy computation we get $M_1(q)=q+1>q^3/(q^2-1)$. Assume that $m>1$. Observe that $i_1=0$ (because $m$ is odd) and $1=2^{i_1}<2^{i_\ell}\leq m-1$. Now
\begin{eqnarray*}
M_m(q)&=&
q^m\prod_{j=1}^\ell\left(1+\frac{1}{q^{i_j}}\right)\geq 
q^m\left(1+\frac{1}{q^{2^{i_1}}}\right)\left(1+\frac{1}{q^{2^{i_\ell}}}\right)\\
&\geq& q^m\left(1+\frac{1}{q}\right)\left(1+\frac{1}{q^{m-1}}\right)>\frac{q^{m+2}}{q^2-1},
\end{eqnarray*}
where the last inequality follows with a direct computation.
\end{proof}
 The upper bound in Lemma~\ref{estimate} is sharp when $m+1$  is a power of $2$. Now we establish some notation for the proof of Theorem~\ref{thrm} (we follow~\cite[Section~$2$]{DGPS}).

\begin{notation}\label{not1}{\rm 
Let $V=\mathbb{F}_q^{2m}$ be the $2m$-dimensional natural module of $\Sp_{2m}(q)$ over the field $\mathbb{F}_q$ with $q$ elements. So, $V$ is equipped with a non-degenerate symplectic form preserved by $\Sp_{2m}(q)$. 

Let $s$ be a semisimple element of $\Sp_{2m}(q)$. The action of the matrix 
$s$ on $V$ defines the structure of an $\mathbb{F}_{q}\langle s\rangle$-module on $V$.  
Since $s$ is semisimple, $V$ decomposes, by Maschke's theorem, as a direct 
sum of irreducible $\mathbb{F}_{q}\langle s\rangle$-modules.

Now, we make use of a theorem of 
Bertram Huppert~\cite[Satz 2]{H}, which we apply to the semisimple element $s$. By Huppert's Theorem, $V$ 
admits an orthogonal decomposition of the following form: 
\begin{eqnarray}\label{eq:decomp}
V&=&V'\perp ((V_{1,1}\oplus V_{1,1}')\perp \cdots \perp (V_{1,m_1}\oplus V_{1,m_1}'))\perp \cdots \\
&&\perp ((V_{r,1}\oplus V_{r,1}')\perp \cdots \perp (V_{r,m_r}\oplus V_{r,m_r}'))\nonumber\\
&&\perp (V_{r+1,1}\perp \cdots \perp V_{r+1,m_{r+1}})
\perp\cdots \perp (V_{t,1}\perp \cdots \perp V_{t,m_{t}}) \nonumber
\end{eqnarray}
where $V'$ is the  eigenspace of $s$ for the eigenvalue
$1$, of dimension $2m'$ (note that either $V'=0$ or $V'$ is non-degenerate, and hence $V'$ has even dimension),
and each  $V_{i,j}$ is a non-trivial irreducible $\mathbb{F}_{q}\langle s\rangle$-submodule. For every $i\in \{1,\ldots,t\}$, we have $\dim_{\mathbb{F}_q}V_{i,j}=\dim_{\mathbb{F}_q}V_{i,j'}$, for each $j,j'\in \{1,\ldots,m_i\}$.
Moreover, for $i\in \{r+1,\ldots,t\}$, $V_{i,j}$ is non-degenerate of dimension $2d_i$  and $s$ 
induces an element of order dividing $q^{d_i}+1$ on $V_{i,j}$. 
For $i\in\{1,\ldots,r\}$, $V_{i,j}$ and $V_{i,j}'$ are totally isotropic  of dimension $d_i$, 
$V_{i,j}\oplus V_{i,j}'$ is non-degenerate, and $s$ 
induces an element $y_{i,j}$ of order dividing $q^{d_i}-1$ on $V_{i,j}$ while
inducing the adjoint representation $(y_{i,j}^{-1})^{tr}$ on  $V_{i,j}'$ (where 
$x^{tr}$ denotes the transpose of the matrix $x$). For our claims about the 
orders and
for some facts on the structure of the maximal tori of $\Sp_{2m}(q)$  we refer to~\cite{BC,KS} or~\cite[Section~$2$]{DGPS}.  

Note that the orthogonal decomposition in~\eqref{eq:decomp} determines the signed partition  
\[ 
\wp(s)=(\underbrace{d_1^{1},\ldots,d_1^{1}}_{m_1\textrm{ times}},\ldots,\underbrace{d_r^{1},\ldots,d_r^{1}}_{m_r\textrm{ times}},
\underbrace{d_{r+1}^{-1},\ldots,d_{r+1}^{-1}}_{m_{r+1}\textrm{ times}},\ldots,\underbrace{d_t^{-1},\ldots,d_t^{-1}}_{m_t\textrm{ times}}) 
\]
of $m-m'$. 

Finally, from~\cite[Proposition~$2.6$]{DGPS}, we see that if $u\in \GL_{2m}(q)$  is  unipotent and centralizes $s$ then
$$|u|\leq \max( 
2^{\lceil \log_2(2{m'})\rceil},2^{\lceil \log_2({m_1})\rceil},\ldots,2^{\lceil \log_2({m_t})\rceil}
).$$
}
\end{notation}

Given two positive integers $a$ and $b$, we write $(a,b)$ for the greatest common divisor of $a$ and $b$. Moreover, we denote by $(a)_2$ the largest power of $2$ dividing $a$. The following lemma is rather elementary but very useful for what follows.

\begin{lemma}\label{basic}Let $a$ and $b$ be positive integers. Then
\begin{description}
\item[(i)]$(q^a-1,q^b-1)=q^{(a,b)}-1$;
\item[(ii)]$(q^a+1,q^b-1)=\left\{
\begin{array}{lcl}
1&&\textrm{if }(a)_2\geq (b)_2,\\
q^{(a,b)}+1&&\textrm{if }(a)_2<(b)_2;
\end{array}
\right.$
\item[(iii)]$
(q^a+1,q^b+1)=\left\{
\begin{array}{lcl}
1&&\textrm{if }(a)_2\neq  (b)_2,\\
q^{(a,b)}+1&&\textrm{if }(a)_2=(b)_2.
\end{array}
\right.
$
\end{description}
\end{lemma}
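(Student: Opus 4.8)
The plan is to deduce parts~(ii) and~(iii) from part~(i) by repeatedly exploiting the factorisation $q^{2c}-1=(q^c-1)(q^c+1)$, together with the remark that, since $q$ is even, $q^c-1$ and $q^c+1$ are coprime (their difference is $2$ and both are odd). Part~(i) itself I would prove by running the Euclidean algorithm on the exponents: assuming $a\geq b$ and writing $a=bk+c$ with $0\leq c<b$, the congruence $q^b\equiv 1\pmod{q^b-1}$ gives $q^a-1\equiv q^{c}-1\pmod{q^b-1}$, hence $(q^a-1,q^b-1)=(q^{c}-1,q^b-1)$; iterating reproduces the Euclidean algorithm computing $(a,b)$ and, with the base case $(q^d-1,q^0-1)=(q^d-1,0)=q^d-1$, yields $q^{(a,b)}-1$.

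For~(ii), since $(q^a-1)(q^a+1)=q^{2a}-1$ with coprime factors, for every $n$ one has $(q^{2a}-1,n)=(q^a-1,n)\cdot(q^a+1,n)$; taking $n=q^b-1$ and applying part~(i) twice gives
$$(q^a+1,q^b-1)=\frac{(q^{2a}-1,q^b-1)}{(q^a-1,q^b-1)}=\frac{q^{(2a,b)}-1}{q^{(a,b)}-1}.$$
It then remains only to compare $(2a,b)$ with $(a,b)$: writing $a=(a)_2\,a'$ and $b=(b)_2\,b'$ with $a',b'$ odd, one checks that $(2a,b)=(a,b)$ precisely when $(a)_2\geq(b)_2$, which makes the displayed ratio equal to $1$, whereas $(2a,b)=2\,(a,b)$ when $(a)_2<(b)_2$, which makes the ratio equal to $(q^{2(a,b)}-1)/(q^{(a,b)}-1)=q^{(a,b)}+1$.

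For~(iii), I would apply the same device to the other argument: from $(q^b-1)(q^b+1)=q^{2b}-1$ with coprime factors one gets $(q^a+1,q^b+1)=(q^a+1,q^{2b}-1)/(q^a+1,q^b-1)$, and now both the numerator and the denominator are values already computed in part~(ii) (note $(2b)_2=2(b)_2$). A short case analysis according to whether $(a)_2$ is less than, equal to, or greater than $(b)_2$ — using also the symmetry $(q^a+1,q^b+1)=(q^b+1,q^a+1)$ to halve the work — then returns $1$ when $(a)_2\neq(b)_2$ and $q^{(a,b)}+1$ when $(a)_2=(b)_2$.

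I do not expect a genuine obstacle here: the argument is elementary throughout, and the only points demanding care are that each of the displayed quotients is \emph{a priori} an integer — which is exactly what the coprimality of $q^c-1$ and $q^c+1$, and hence the hypothesis that $q$ is even, guarantees — and that the $2$-adic valuation bookkeeping in the case distinctions of~(ii) and~(iii) is carried out consistently.
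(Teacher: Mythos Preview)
Your proposal is correct and follows essentially the same route as the paper: part~(i) via the Euclidean algorithm on exponents (the paper uses the subtractive form $a\mapsto a-b$ rather than your $a\mapsto a\bmod b$, but this is immaterial), and parts~(ii) and~(iii) via the factorisation $q^{2c}-1=(q^c-1)(q^c+1)$ into coprime factors, reducing to~(i) and then to~(ii) respectively. The paper's proof is slightly terser in~(iii), simply saying ``applying~(ii) to $(q^a+1,q^{2b}-1)$ and arguing as above,'' but the content is the same as yours.
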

\begin{proof}
Part~(i) follows by induction on $\max(a,b)$. In fact, if $|a-b|=0$, then there is nothing to prove. If $|a-b|>0$, then, interchanging the roles of $a$ and $b$ if necessary, we may assume that $a>b$. Now $q^a-1=(q^{a-b}-1)q^b+(q^b-1)$ and hence $(q^a-1,q^b-1)=(q^{a-b}-1,q^b-1)=q^{(a-b,b)}-1=q^{(a,b)}-1$.

Observe that if $x$ is even, then $(x+1,x-1)=1$ and hence $(x^2-1,y)=(x-1,y)(x+1,y)$ for every $y$. Thus, by applying~$(i)$ twice, we get  
\begin{eqnarray*}
q^{(2a,b)}-1&=&(q^{2a}-1,q^{b}-1)=(q^a-1,q^{b}-1)(q^a+1,q^b-1)\\ 
&=&(q^{(a,b)}-1)(q^{a}+1,q^b-1).
\end{eqnarray*}
Now to deduce~(ii) note that $(2a,b)=(a,b)$ if $(a)_2\geq (b)_2$, and $(2a,b)=2(a,b)$ if $(a)_2<(b)_2$.

Finally, part~(iii) follows applying~(ii) to $(q^{a}+1,q^{2b}-1)$ and arguing as above. 
\end{proof}

We are now ready to prove Theorem~\ref{thrm} (except for Theorem~\ref{thrm2}, the proof adapts and follows closely the ideas developed in~\cite[Section~$2$]{DGPS}). 
\begin{proof}[Proof of Theorem~\ref{thrm}]
Let $M$ be the maximum order of the elements of $\Sp_{2m}(q)$. We start by showing that $M_m(q)\leq M$. From the description of the semisimple elements given in Notation~\ref{not1} we see that $\Sp_{2m}(q)$ contains an element $g$ with $|g|=M_m(q)$. For example, assume that $q>2$ and that $m$ is odd, and let $m=2^{i_1}+2^{i_2}+\cdots +2^{i_\ell}$ be the $2$-adic expansion of $m$. From Lemma~\ref{basic}~(iii), we see that $q^{2^{i_1}}+1,\ldots,q^{2^{i_\ell}}+1$ are pair-wise coprime. Then, for $g$, it suffices to take a semisimple element of order $(q^{2^{i_1}}+1)\cdots (q^{2^{i_\ell}}+1)$ in the maximal torus isomorphic to $C_{q^{i_1}+1}\times \cdots \times C_{q^{i_\ell}+1}$ (the direct product of cyclic groups of orders $q^{2^{i_1}}+1,\ldots,q^{2^{i_\ell}}+1$). We give another example: assume that $q=2$ and let $2^\ell$ be the largest power of $2$ with $2^\ell-1\leq m$. Write  $m=m_0+(2^\ell-1)$ and suppose that $m_0\leq 3$. Then, for $g$, it suffices to take $g=su=us$, with $s$  a semisimple element  of $\Sp_{2(m-m_0)}(q)$ of order $(q+1)\cdots (q^{2^{\ell-1}}+1)$ in the maximal torus isomorphic to $C_{q+1}\times \cdots \times C_{q^{2^{\ell-1}}+1}$, and with $u$ a unipotent element of $\Sp_{2m_0}(q)$ of order $1$ if $m_0=0$, $2$ if $m_0=1$, $4$ if $m_0=2$,  or $8$ if $m_0=3$ (the existence of $u$ follows by a direct inspection in $\Sp_2(2)$, $\Sp_4(2)$ and $\Sp_6(2)$). The other cases are similar and we leave them to the reader. 

Next, we show that $M\leq M_m(q)$. Let $g$ be an element of $\Sp_{2m}(q)$ with $|g|=M$ and write $g=su=us$, with $s$ semisimple and $u$ unipotent. We use Notation~\ref{not1} for $s$ and $u$. In particular, we have
$$|s|\leq \lcm(q^{d_i}-\varepsilon_i\mid i\in \{1,\ldots,t\})$$
and
\begin{equation}\label{equ}
|u|\leq \max(2^{\lceil \log_2(2m')\rceil},2^{\lceil \log_2(m_1)\rceil},\ldots, 2^{\lceil \log_2(m_t)\rceil}).
\end{equation}

We show that, by replacing $g$ if necessary, we may assume that $m_i=1$ for each $i\in \{1,\ldots,t\}$. We do this in two separate claims.

\smallskip

\noindent\textsc{Claim~$1$. }Replacing $g$ with an element $g'$ having $|g'|=|g|$, we may assume that $m_i=1$ for each $i\in \{1,\ldots,r\}$.

\smallskip

\noindent A computation shows that, for every $a,b\geq 1$, $q^{a}-1$ divides $q^{ab}-1$ and $(q^{ab}-1)/(q^a-1)\geq 2^{\lceil\log_2(b)\rceil}$ (see~\cite[Lemma~$2.4$~(i)]{DGPS}).

Suppose that for some $i\in\{1,\ldots,r\}$ we have $m_i>1$. Then replacing the action of 
$g$ on $(V_{i,1}\oplus V_{i,1}')\oplus \cdots\oplus (V_{i,m_i}\oplus V_{i,m_i}')$ with 
the action given by a semisimple element of order $q^{d_im_i}-1$ (and so having only 
two totally isotropic irreducible $\mathbb{F}_q\langle s\rangle$-submodules), 
we obtain an element  $g'$ such that $|g|$ divides $|g'|$ and  $m_i=1$. In particular, 
replacing $g$ by $g'$ if necessary, we may assume that $g=g'$.~$_\blacksquare$

\smallskip 

\noindent\textsc{Claim~$2$. }Replacing $g$ with an element $g'$ having $|g'|=|g|$, we may assume that $m_i=1$ for each $i\in \{r+1,\ldots,t\}$.

\smallskip 

\noindent  A computation shows that, for $a\geq 1$ and for $b$ odd, $q^{a}+1$ divides $q^{ab}+1$ and $(q^{ab}+1)/(q^a+1)\geq 2^{\lceil\log_2(b)\rceil}$ for $(q,a,b)\neq (2,1,3)$. Moreover, 
for $a\geq 1$ and for $b$ even, $q^{a}+1$ divides $q^{ab}-1$ and $(q^{ab}-1)/(q^a+1)\geq 2^{\lceil\log_2(b)\rceil}$ for $(q,a,b)\neq (2,1,2)$. (See~\cite[Lemma~$2.4$~(ii) and ~(iii)]{DGPS}.)

Suppose that for some $i\in\{r+1,\ldots,t\}$ we have $m_i>1$. With an argument similar to the proof of Claim~$1$, we may assume that $q=2$ and $(d_i,m_i)\in \{(1,3),(1,2)\}$.

Suppose that $(d_{i},m_{i})=(1,3)$. 
The element $g$ induces on $W=V_{i,1}\perp V_{i,2}\perp V_{i,3}$ an element of order 
dividing $(q+1)2^{\lceil \log_2(3)\rceil}=3\cdot 2^2$. 
Let $g'$ be the element acting as $g$ on $W^\perp$, inducing an element of order $3$ 
on $V_{i,1}$ and inducing a regular unipotent element on $V_{i,2}\perp V_{i,3}$. 
Now, $g'$ induces on $W$ an element of order $(q+1)2^{\lceil\log_2(4)\rceil}=3\cdot 2^2$. 
Therefore $|g|=|g'|$ and so, we may replace $g$ by $g'$ (note that in doing so the dimension 
of $V'$ increases by $2$ and $m_{i}$ decreases from $3$ to $1$). 

Suppose that $(d_{i},m_{i})=(1,2)$. 
The element $g$ induces on $W=V_{i,1}\perp V_{i,2}$ an element of order dividing 
$(q+1)2^{\lceil \log_2(2)\rceil}=6$. Let $g'$ be the element acting as $g$ on 
$W^\perp$, inducing an element of order $3$ on $V_{i,1}$ and inducing an element 
of order $2$ on $V_{i,2}$. Now, $g'$ induces on $W$ an element of order $6$. 
Therefore $|g|=|g'|$ and so, we may replace $g$ by $g'$.~$_\blacksquare$

\smallskip

From Claims~$1$ and~$2$, we have $m_i=1$ for every $i\in \{1,\ldots,t\}$, and hence $|u|\leq 2^{\lceil\log_2(2m')\rceil}$ by~\eqref{equ}. 
Thus, from Theorem~\ref{thrm2}, we obtain $$M=|g|=|s||u|\leq L_{m,q}(m',\wp(s))\leq M_m(q).$$
This concludes the proof of the first statement.

We are then left with computing the maximum order of the elements of $\Aut(\Sp_{2m}(q))$. Write $q=2^f$ and let $M$ be the maximum order of the elements of $\Aut(\Sp_{2m}(q))$. From~\cite[Table~$5$, page~xvi]{ATLAS},  we have $\Aut(\Sp_{2m}(q))\cong (\Sp_{2m}(q)\rtimes \langle \phi\rangle).\Gamma$, where $\phi$ is a generator of the group of field automorphisms and $\Gamma$ is the group of automorphisms of the Dynkin diagram. Hence $|\Gamma|=2$ if $m=2$, and $|\Gamma|=1$ if $m\neq 2$. 

Let $g\in \Aut(\Sp_{2m}(q))$ with $|g|=M$. If $g\in \Sp_{2m}(q)$, then from the first part of the theorem we get $M=M_m(q)$. Thus, it suffices to study the case that $g\notin\Sp_{2m}(q)$. Suppose that $g=\varphi x$ with $x\in \Sp_{2m}(q)$ and with $\varphi$ a non-identity field automorphism of order $e>1$. In particular, $e$ is a divisor of $f$. 

Let $\mathbb{F}$ be the algebraic closure of the field $\mathbb{F}_{q}$. By Lang's theorem, there exists $a\in \Sp_{2m}(\mathbb{F})$ with $a^{\varphi}a^{-1}=x$. Observe that
\begin{eqnarray*}
(a^{-1}g^ea)^\varphi&=&a^{-\varphi}(x^{\varphi^{e-1}}\cdots x^{\varphi} x)^\varphi a^{\varphi}=a^{-\varphi}(x^{\varphi^{e}}\cdots x^{\varphi^2} x^\varphi)a^{\varphi}\\
&=&(a^{-1}x^{-1})(xx^{\varphi^{e-1}}\cdots x^{\varphi^2} x^\varphi)(xa)
=a^{-1}(x^{\varphi^{e-1}}\cdots x^\varphi x)a=a^{-1}g^ea.
\end{eqnarray*}
Thus $a^{-1}g^ea$ is invariant under the field automorphism $\varphi$. Hence $a^{-1}g^ea\in \Sp_{2m}(q^{1/e})$ and, from the first part of the theorem applied to $\Sp_{2m}(q^{1/e})$, we get $|a^{-1}g^ea|\leq M_m(q^{1/e})$. Since $a^{-1}g^ea$ is conjugate to $g^e$, we have
\begin{equation}\label{idiot}
|g|=e|g^e|=e|a^{-1}g^ea|\leq eM_m(q^{1/e})\leq e\frac{q^{m/e+1}-1}{q^{1/e}-1},
\end{equation}
where the last inequality follows from Lemma~\ref{estimate}. Now it is a computation to verify that, for $m\neq 1$ and $(f,e,m)\notin\{(2,2,2),(3,3,2)\}$, we have $q^m\geq e(q^{m/e+1}-1)/(q-1)$ and hence $M=|g|\leq M_{m}(q)$ by the lower bound in  Lemma~\ref{estimate}. For $(f,e,m)\in \{(2,2,2),(3,3,2)\}$, a computation with the computer algebra system \texttt{magma}~\cite{magma} shows that the maximal element order of $\Sp_4(4)\rtimes \langle \phi\rangle$ is $17=M_2(4)$ and of $\Sp_4(8)\rtimes\langle \phi\rangle$ is $65=M_2(8)$. For $m=1$, from~\eqref{idiot}, we get $|g|\leq eM_1(q^{1/e})=e(q^{1/e}+1)$. Now, another computation shows that $e(q^{1/e}+1)\leq q+1=M_1(q)$ except for $(f,e)=(2,2)$. Clearly, the maximal element order of $\Aut(\Sp_2(4))$ is $6$, which is one of the exceptions in the statement of this theorem. 

It remains to consider the case $g\in \Aut(\Sp_{2m}(q))\setminus (\Sp_{2m}(q)\rtimes \langle\phi\rangle)$. In particular, $m=2$. Observe that $g^2\in \Sp_4(q)\rtimes \langle \phi\rangle$ and that $M_2(q)=q^2+1$ if $q>2$ and $M_2(2)=6$ if $q=2$. Now, we subdivide the proof into two subcases depending on whether $g^2\in \Sp_4(q)$ or $g^2\notin \Sp_4(q)$. Suppose that $g^2\notin \Sp_4(q)$. Then $g^2=\varphi x$ for some $x\in \Sp_4(q)$ and some field automorphism $\varphi$ of order $e> 1$. The same argument as in the previous two paragraphs shows that $|g|=2|g^2|\leq 2eM_2(q^{1/e})$, which is bounded above by $q^2+1$ for $q>4$. For $q=4$, with \texttt{magma} we see that the maximal element order of $\Aut(\Sp_4(4))$ is $20$, which is one of the exceptions in the statement of this theorem.

Finally, suppose that $g^2\in \Sp_{4}(q)$. Since $g\notin\Sp_4(q)$, the element $g$ projects to an element of order $2$ of $\Out(\Sp_4(q))$. Now, $\Out(\Sp_4(q))$ is cyclic of order $2f$ generated by the  ``extraordinary graph'' automorphism. In particular, if $f$ is even, then $g^2\notin \Sp_4(q)$. Hence $f$ is odd. Assume that $g^2$ has odd order. Then $g$ is centralized by the outer automorphism $g^{|g|/2}$ of order $2$. In particular, $g^2\in \cent{\Sp_4(q)}{g^{|g|/2}}\cong {^2}B_2(q)$ (the last isomorphism follows from~\cite[Proposition~$4.9.1$]{GLS}). Now, from~\cite{Suzuki}, we see that $|g^2|\leq q+\sqrt{2q}+1$. So, $M=|g|\leq 2(q+\sqrt{2q}+1)\leq M_2(q)$, for $q>2$. The maximal element order of $\Aut(\Sp_4(2))$ is $10$, which is one of the exceptions in the statement of the theorem. To conclude, suppose that $g^2$ has even order. A detailed analysis of the elements of even order of $\Sp_4(q)$ shows that $|g^2|\leq 2(q+1)$. Now $|g|\leq 4(q+1)\leq M_2(q)$ for $q>4$. As we have already considered $\Sp_4(2)$ and $\Sp_4(4)$, the proof is complete. 
\end{proof}

\section{Proof of Theorem~\ref{thrm2}}\label{sec4}

Before proceeding with the main result of this section (namely, Theorem~\ref{thrm2}) we single out a rather technical lemma that will be used in its proof.
\begin{lemma}\label{babylonians}
Let $d_1,\ldots,d_\ell$ be positive integers and suppose that $(d_1)_2,\ldots,(d_\ell)_2$ are pair-wise distinct. Let $2^{x_1}+\cdots+2^{x_t}$ be the $2$-adic expansion of $d_1+\cdots +d_\ell$. Then $\prod_{i=1}^\ell(q^{d_i}+1)\leq\prod_{j=1}^t(q^{2^{x_j}}+1)$.
\end{lemma}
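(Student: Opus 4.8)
The plan is to prove the inequality by induction on $\ell$, peeling off one part at a time while keeping careful track of how the $2$-adic expansion of the sum changes. The base case $\ell=1$ is trivial: if $d_1=2^{x_1}+\cdots+2^{x_t}$ is the $2$-adic expansion, then $(d_1)_2=2^{x_1}$ (with $x_1<\cdots<x_t$), and we must show $q^{d_1}+1\leq\prod_{j=1}^t(q^{2^{x_j}}+1)$; expanding the right-hand side gives $q^{d_1}+(\text{lower-order terms with nonnegative coefficients})+1$, so the inequality is clear. For the inductive step, I would single out the index $i_0$ for which $(d_{i_0})_2$ is \emph{maximal} among $(d_1)_2,\ldots,(d_\ell)_2$; call this value $2^a$. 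The point of choosing the maximal $2$-adic part is this: since $2^a\mid d_{i_0}$ and $2^a$ strictly exceeds the power of $2$ dividing each other $d_i$, the sum $d_{i_0}+\sum_{i\neq i_0}d_i$ has a controlled carry structure — specifically, writing $s=\sum_{i\neq i_0}d_i$, the binary digit of $s$ in position $a$ is $0$ (because every $d_i$ with $i\neq i_0$ contributes only to positions below $a$... wait, this needs care: the $d_i$ can be large, so they do have high-order bits). Let me reconsider.

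A cleaner route: use the identity~\eqref{eq0} and Lemma~\ref{basic}. The key algebraic fact is that when $b$ is \emph{odd}, $q^a+1$ divides $q^{ab}+1$, and more generally $\prod(q^{d_i}+1)$ can be compared term-by-term. Actually the most robust approach is the following two-step reduction. \emph{Step 1:} reduce to the case where each $d_i$ is itself a power of $2$. Given $d_i$ with $2$-adic valuation $2^{a_i}$, write $d_i=2^{a_i}c_i$ with $c_i$ odd; since $c_i$ is odd, $q^{2^{a_i}}+1$ divides $q^{d_i}+1$ and the quotient is a polynomial in $q$ with nonnegative coefficients and constant term $1$, hence $q^{d_i}+1\le (q^{2^{a_i}}+1)\cdot(\text{something})$ — but this inflates rather than bounds, so I instead argue directly that $q^{d_i}+1 = (q^{2^{a_i}}+1)\bigl(q^{d_i-2^{a_i}} - q^{d_i-2\cdot 2^{a_i}} + \cdots +1\bigr)$ and replace $d_i$ by its binary expansion: since $(d_i)_2=2^{a_i}$, the binary expansion of $d_i$ has lowest term $2^{a_i}$, and one checks $q^{d_i}+1\le \prod_{2^k \text{ a bit of } d_i}(q^{2^k}+1)$ by the base-case argument, with the crucial observation that the $2$-adic valuations appearing across \emph{different} $i$'s, when we expand everyone into binary, need not stay distinct — so Step 1 alone does not suffice and we must combine the bits.

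\emph{Step 2 (the main obstacle):} after expanding every $d_i$ into its binary digits, we have a multiset $S$ of powers of $2$ (with repetitions now allowed) summing to $n:=d_1+\cdots+d_\ell$, and we must show $\prod_{2^k\in S}(q^{2^k}+1)\le \prod_{j=1}^t(q^{2^{x_j}}+1)$ where $n=2^{x_1}+\cdots+2^{x_t}$ is the \emph{reduced} binary expansion. This is a clean statement about powers of $2$: repeatedly apply the rule that two equal parts $2^k,2^k$ combine, and $(q^{2^k}+1)^2 = q^{2^{k+1}}+2q^{2^k}+1 \le (q^{2^{k+1}}+1)(q^{2^k}+1)\cdot(\dots)$ — again inflating. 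The correct combining inequality is $(q^{2^k}+1)^2\le q^{2^{k+1}}+1 + 2q^{2^k} \le (q^{2^{k+1}-1}-q^{2^{k+1}-2}+\cdots+1)(q-1)+\cdots$; more simply, $(q^{2^k}+1)^2 < q^{2^{k}}(q^{2^k}+1)\cdot\frac{q+1}{q}$... I will instead prove directly that for any multiset of powers of $2$ summing to $n$, the product $\prod(q^{e}+1)$ is \emph{maximized} (over all such multisets) by the reduced binary expansion, by showing that any "carry" move $2^k,2^k \mapsto 2^{k+1}$ strictly increases the product: equivalently $(q^{2^k}+1)^2 \le q^{2^{k+1}}+1$ fails, so instead one shows $2^k, 2^k, (\text{rest})$ gives a product at most that of $2^{k+1},(\text{rest})$ only after absorbing — hence the reduced expansion is NOT obviously the max. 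I expect the genuine difficulty to lie exactly here, and I would resolve it by noting that the hypothesis "$(d_i)_2$ pairwise distinct" means Step 1's binary expansions produce bits that overlap in a restricted way (only bits strictly above $\min_i (d_i)_2$ can collide), allowing an induction on $\ell$ where at each stage the maximal-valuation part $d_{i_0}=2^a c$ merges cleanly with the accumulated sum because that sum has valuation $<2^a$, so no carry reaches position $a$ or below from below, and one applies Lemma~\ref{basic}(iii) together with~\eqref{eq0} to identify $\prod_{i\neq i_0}(q^{d_i}+1)$'s relation to $q^{\sum_{i\ne i_0}d_i}$ — the distinctness keeping the relevant gcd's equal to $1$ so that lcm's become products, then re-bounding. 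I will carry out the induction in that order: isolate the maximal $2$-part, apply the $\ell=1$ base estimate to that part together with the inductive hypothesis to the remaining $\ell-1$ parts, and finally verify the carry-free merging of binary expansions using the valuation gap.
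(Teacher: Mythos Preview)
Your proposal correctly identifies the central obstacle: after bounding each $q^{d_i}+1$ by the product over the binary digits of $d_i$ (your Step~1), the resulting powers of $2$ may repeat, and the combining move $(q^{2^k}+1)^2\le q^{2^{k+1}}+1$ goes the wrong way. But your proposed resolution does not work. The claim that isolating the part $d_{i_0}$ of \emph{maximal} $2$-adic valuation $2^a$ yields a carry-free merge is false: the sum $s=\sum_{i\neq i_0}d_i$ has valuation strictly less than $2^a$, yes, but its binary expansion can and typically will have bits at position $a$ and above. Already with $\ell=2$: take $d_1=3$, $d_2=2$; the valuations $1,2$ are distinct, $d_{i_0}=d_2$, and $s=d_1=3$ has a bit at position~$1$, colliding with $d_2=2$. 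Your scheme produces
\[
(q^3+1)(q^2+1)\;\le\;\underbrace{(q^2+1)}_{\text{base case on }d_2}\cdot\underbrace{(q^2+1)(q+1)}_{\text{bits of }s=3},
\]
but $(q^2+1)^2(q+1)$ exceeds the target $(q^4+1)(q+1)$ since $(q^2+1)^2>q^4+1$. The final merge is exactly the false inequality you already diagnosed, and the ``valuation gap'' does nothing to prevent it.

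The paper proceeds differently. It isolates the part $d_1$ of \emph{minimal} valuation and first proves the lemma in the special case where $d_2,\ldots,d_\ell$ are already powers of $2$, by induction on $d_1$. In the inductive step one peels off the \emph{top} bit $2^x$ of $d_1$, writing $d_1=d_1'+2^x$ with $d_1'<2^x$. If $2^x$ collides with some $d_k$, one does not carry pairwise but absorbs the whole run $d_k,\,2d_k,\,\ldots,\,2^sd_k$ of consecutive doublings present among the $d_i$ at once, via the explicit inequality
\[
(q^{d_1}+1)\prod_{j=0}^{s}(q^{2^jd_k}+1)\;\le\;(q^{d_1'}+1)\,(q^{2^{s+1}d_k}+1),
\]
which follows from the telescoping identity~\eqref{eq0} together with the elementary bound $q^{d_1}+1\le(q^{d_1'}+1)(q^{d_k}-1)$ (valid because $d_1'<2^x=d_k$). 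This run-collapsing inequality is the missing idea: it trades the loss from carrying against the shrinkage $d_1\mapsto d_1'$ on the other side. The general case then follows by an outer induction on $\ell$, first replacing $d_2,\ldots,d_\ell$ by the bits of their sum and then invoking the special case with only $d_1$ not a power of $2$.
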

\begin{proof}
Relabelling the index sets $\{1,\ldots,t\}$ and $\{1,\ldots,\ell\}$, we may assume that $x_1<\cdots <x_t$ and that $(d_1)_2<\cdots <(d_\ell)_2$. Observe that this yields $(d_1)_2=2^{x_1}$.

We first deal with the case that each of $d_2,\ldots,d_\ell$ is a power of $2$: the general statement will then easily follow by induction. Note that this case includes (vacuously) the case $\ell=1$. 

We argue by induction on $d_1$. Suppose that  $d_1$ is itself a power of $2$. Then the summands of $d_1+\cdots+d_\ell$ already give its $2$-adic expansion. Thus $t=\ell$, $\{d_1,\ldots,d_\ell\}=\{2^{x_1},\ldots,2^{x_\ell}\}$ and there is nothing to prove. 

Suppose that $d_1$ is not a power of $2$, that is, $2^{x_1}=(d_1)_2<d_1$. Let $2^x$ be the largest power of $2$ with $2^x\leq d_1$. Clearly, $x_1<x$. Write $d_1'=d_1-2^x$ and note that $(d_1')_2=(d_1)_2$. Assume that $2^x\neq d_k$, for every $k\in \{2,\ldots,\ell\}$. Then  $(d_1')_2,(2^x)_2,(d_2)_2,\ldots,(d_\ell)_2$ are pair-wise distinct and $d_1'<d_1$. Since the $2$-adic expansion of $d_1'+2^x+d_2+\cdots+d_\ell$ is still $2^{x_1}+\cdots +2^{x_t}$, we conclude  by induction that $(q^{d_1'}+1)(q^{2^x}+1)\prod_{i=2}^\ell(q^{d_i}+1)\leq \prod_{j=1}^t(q^{2^{x_j}}+1)$. As $q^{d_1}+1<(q^{d_1'}+1)(q^{2^x}+1)$, we have $\prod_{i=1}^\ell(q^{d_i}+1)\leq\prod_{j=1}^t(q^{2^{x_j}}+1)$. 

Next, assume that $2^x=d_k$, for some $k\in \{2,\ldots,\ell\}$.
Let $s$ be the largest non-negative integer with $d_{k+j+1}=2d_{k+j}$, for every $j\in \{0,\ldots,s-1\}$. (For instance, $s=0$ exactly when $d_{k+1}>2d_k$, and $s=1$ exactly when $d_{k+1}=2d_k$ and $d_{k+2}>2d_{k+1}$.) Recalling that $d_2,\ldots,d_\ell$ are powers of $2$ and that $2^x=d_k$, we have $2^x+d_k+d_{k+1}+\cdots+d_{k+s}=2^{s+1}d_k$ and we obtain that 
$$d_2+d_3+\cdots +d_{k-2}+d_{k-1}+\, 2^{s+1}d_k\,+d_{k+s+1}+d_{k+s+2}+\cdots+d_{\ell-1} +d_\ell$$
is the $2$-adic expansion of $2^x+d_2+d_3+\cdots +d_\ell$. 
From this it follows that the elements
$(d_1')_2,(d_2)_2,\ldots,(d_{k-1})_2,(2^{s+1}d_k)_2,(d_{k+1})_2,\ldots,(d_\ell)_2$ are pair-wise distinct. As $d_1'<d_1$, by induction, we have
\begin{equation}\label{hard2}
(q^{d_1'}+1)
\left(
\prod_{i=2}^{k-1}
(q^{d_i}+1)
\right)
(q^{2^{s+1}d_k}+1)
\left(\prod_{i=k+1}^\ell(q^{d_i}+1)\right)\leq \prod_{j=1}^t(q^{2^{x_j}}+1).
\end{equation}

We now show that 
\begin{equation}\label{hard1}
(q^{d_1}+1)(q^{d_k}+1)(q^{d_{k+1}}+1)\cdots (q^{d_{k+s}}+1)\leq (q^{d_1'}+1)(q^{2^{s+1}d_k}+1).
\end{equation}
Let $A$ be the left hand side of~\eqref{hard1}. Applying~\eqref{eq0} (with $q$ replaced by $q^{d_k}$) we get
\begin{eqnarray*}
A&=&(q^{d_1}+1)\prod_{i=0}^s\left((q^{d_k})^{2^i}+1\right)=(q^{d_1}+1)\frac{q^{2^{s+1}d_k}-1}{q^{d_k}-1}<(q^{d_1}+1)\frac{q^{2^{s+1}d_k}+1}{q^{d_k}-1}.
\end{eqnarray*}
Now, recalling that $d_1=d_1'+2^x=d_1'+d_k$ and $d_1'<2^x$, it is elementary to check that 
$$q^{d_1}+1\leq (q^{d_1'}+1)(q^{d_k}-1),$$
from which~\eqref{hard1} immediately follows. 

We now return to the proof of the lemma. From~\eqref{hard1}, we get
$$\prod_{i=1}^\ell(q^{d_i}+1)\leq (q^{d_1'}+1)\left(\prod_{i=2}^{k-1}(q^{d_i}+1)\right)(q^{2^{s+1}d_k}+1)\left(\prod_{i=k+1}^\ell(q^{d_i}+1)\right)$$ 
and hence $\prod_{i=1}^\ell(q^{d_i}+1)\leq \prod_{j=1}^t(q^{2^{x_j}}+1)$ by~\eqref{hard2}. This concludes the case when each of $d_2,\ldots,d_\ell$ is a power of $2$.

Next, we argue by induction on $\ell$. Recall that $\ell\geq 2$. Let $2^{y_1}+\cdots +2^{y_r}$ be the $2$-adic expansion of $d_2+\cdots+d_\ell$. By induction, we have 
\begin{equation}\label{hard3}
\prod_{i=2}^\ell(q^{d_i}+1)\leq \prod_{j=1}^r(q^{2^{y_j}}+1).
\end{equation}
Now, the integers $d_1,2^{y_1},\ldots,2^{y_r}$ add up to $d_1+\cdots+d_\ell$, the $2$-powers $(d_1)_2,2^{y_1},\ldots,2^{y_r}$ are pair-wise distinct  and $d_1$ is the only number which is not (necessarily) a power of $2$. Thus, by the case that we discussed above, we have
\begin{equation}\label{hard4}
(q^{d_1}+1)\prod_{j=1}^r(q^{2^{y_j}}+1)\leq \prod_{j=1}^t(q^{x_j}+1).
\end{equation}
Now the induction follows from~\eqref{hard3} and~\eqref{hard4}. 
\end{proof}

For simplifying  some of the arguments in the proof of Theorem~\ref{thrm2} it is convenient to deal  separately with $q=2$ and with small values of $m$. 
\begin{lemma}\label{comp}
If $q=2$ and $m\leq 8$, then $M_m(q)=\max(L_{m,q}(m',\wp)\mid m',\wp)$.
\end{lemma}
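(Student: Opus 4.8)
The plan is to treat Lemma~\ref{comp} as a finite check: for each $m\in\{1,\ldots,8\}$ there are only finitely many pairs $(m',\wp)$ with $0\le m'\le m$ and $\wp$ a signed partition of $m-m'$, and since $q=2$ every integer occurring is small, so one can compute $\max(L_{m,2}(m',\wp)\mid m',\wp)$ outright and compare it with the relevant entry of Table~\ref{table}.

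For the easy inequality $M_m(2)\le\max(L_{m,2}(m',\wp)\mid m',\wp)$ I would exhibit an explicit optimal pair. For $1\le m\le 8$ the integer $\ell$ from the last part of Definition~\ref{def} satisfies $2^\ell-1\le m<2^{\ell+1}-1$, so $m_0:=m-(2^\ell-1)\le 3$ and $M_m(2)$ always falls in the first line of~\eqref{eqdef}, namely $M_m(2)=2^{m_0}(2^{2^\ell}-1)$. Now take $m'=m_0$, noting that $2^{\lceil\log_2(2m_0)\rceil}=2^{m_0}$ for $m_0\le 3$, and $\wp=(1^{-1},2^{-1},4^{-1},\ldots,(2^{\ell-1})^{-1})$, a signed partition of $1+2+\cdots+2^{\ell-1}=2^\ell-1=m-m_0$. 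By Lemma~\ref{basic}~(iii) the integers $q+1,q^2+1,\ldots,q^{2^{\ell-1}}+1$ are pairwise coprime, so their least common multiple equals their product, which is $(q^{2^\ell}-1)/(q-1)=2^{2^\ell}-1$ by~\eqref{eq0}; hence $L_{m,2}(m_0,\wp)=2^{m_0}(2^{2^\ell}-1)=M_m(2)$.

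For the reverse inequality I would, for each $m\le 8$, run through the pairs $(m',\wp)$ and bound $L_{m,2}(m',\wp)$. To shorten the list I would first observe that a part $1^{+1}$ contributes $q^1-1=1$ to the least common multiple, so deleting it and increasing $m'$ by one does not decrease $L_{m,2}$ (the quantity $2^{\lceil\log_2(2m')\rceil}$ is non-decreasing in $m'$); hence one may assume every positively signed part has size at least $2$. With this restriction only a handful of signed partitions of each $n=m-m'\le 8$ survive, and every least common multiple of values $q^{d_i}-\varepsilon_i$ is evaluated at once using Lemma~\ref{basic} (for $q=2$: $2^a-1$ divides $2^b-1$ when $a$ divides $b$, and the numbers $2^{2^i}+1$ are pairwise coprime). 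Carrying this out for $m=1,\ldots,8$ gives $L_{m,2}(m',\wp)\le M_m(2)$ in every case, with equality at the pair of the previous paragraph.

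No genuine difficulty arises; the only labour is the bookkeeping, and it is heaviest for $m=7$ and $m=8$, where the partitions of $m-m'$ are most numerous. Even there the reduction above leaves very few competitors: for instance when $m=8$ one checks that the largest value is $510=2\cdot\lcm(3,5,17)$, attained at $m'=1$ and $\wp=(1^{-1},2^{-1},4^{-1})$, and that no other pair exceeds it. If one prefers to avoid the hand computation, Lemma~\ref{babylonians} can be used to cut down the negatively signed parts further, or the whole statement can be confirmed by a short computer enumeration.
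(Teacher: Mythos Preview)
Your proposal is correct and is essentially the same approach as the paper's: the paper's entire proof is the single sentence ``This follows with a computation with \texttt{magma}'', so both proofs amount to the finite verification you describe. Your write-up adds value by exhibiting the optimal pair $(m',\wp)$ explicitly and by indicating the reductions (dropping parts $1^{+1}$, using Lemma~\ref{basic}) that make a hand check feasible, but the underlying argument is identical.
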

\begin{proof}
This follows with a computation with \texttt{magma}~\cite{magma}.
\end{proof}

\begin{proof}[Proof of Theorem~\ref{thrm2}]
For convenience, we let $M$ denote $\max(L_{m,q}(m',\wp)\mid m',\wp)$. It is easy to show, with a case-by-case analysis, that $M\geq M_m(q)$. Here we give full details when $m\geq 4$ is even and $q>2$, the other cases are similar. Let $\ell$ be the largest positive integer with $2^\ell+2^{\ell-1}\leq m$. Take ${m'}=0$ and the signed partition $\wp=(1^{-1},2^{-1},\ldots,(2^{\ell-1})^{-1},(m-2^{\ell}+1)^{1})$ of $m$. Now, a direct application of  Lemma~\ref{basic} and~\eqref{eq0} gives that 
\begin{eqnarray*}
L_{m,q}({m'},\wp)&=&2^{\lceil \log_2(2{m'})\rceil}\lcm(q+1,q^2+1,\ldots,q^{2^{\ell-1}}+1,q^{m-2^{\ell}+1}-1)\\
&=&(q^{m-2^\ell+1}-1)\prod_{i=0}^{\ell-1}(q^{2^i}+1)=(q^{m-2^{\ell}+1}-1)\frac{q^{2^\ell}-1}{q-1}=M_m(q)
\end{eqnarray*}
and hence 
\begin{equation}\label{eq00}
M\geq M_m(q).
\end{equation}

Now we show  that $M\leq M_m(q)$ arguing by induction on $m$. Choose ${m'}$ and $\wp=(d_1^{\varepsilon_1},\ldots,d_\ell^{\varepsilon_\ell})$ with $M=L_{m,q}({m'},\wp)$. We subdivide the proof into eleven claims, from which the result will immediately follow.

\smallskip

\noindent\textsc{Claim~$1$. }Replacing ${m'}$ and $\wp$ if necessary, we may assume that $d_1^{\varepsilon_1},\ldots,d_\ell^{\varepsilon_\ell}$ are pair-wise distinct.

\smallskip

\noindent  We argue by contradiction and we assume that $d_i^{\varepsilon_i}=d_j^{\varepsilon_j}$, for two distinct indices $i,j\in \{1,\ldots,\ell\}$. Let $m_1={m'}+d_i$ and let $\wp_1$ be the signed partition of $m-m_1$ of length $\ell-1$ obtained by removing $d_i^{\varepsilon_i}$ from $\wp$. As $\lcm(q^{d_i}-\varepsilon_i,q^{d_j}-\varepsilon_j)=q^{d_j}-\varepsilon_j$, we get $L_{m,q}({m'},\wp)\leq L_{m,q}(m_1,\wp_1)$. Now the claim follows by iterating this procedure.~$_\blacksquare$

\smallskip

\noindent\textsc{Claim~$2$. }Either $q>2$ and ${m'}=0$, or $q=2$ and ${m'}\leq 3$.

\smallskip

\noindent Applying both inequalities of Lemma~\ref{estimate} (first the lower bound to $M_m(q)$ and then the upper bound to $M_{m-m'}(q)$),~\eqref{eq00} and the induction on $m$, we have

\begin{eqnarray*}
q^m&<&M_{m}(q)\leq M=L_{m,q}({m'},\wp)=2^{\lceil \log_2(2{m'})\rceil}L_{m-{m'},q}(0,\wp)\\
&\leq&2^{\lceil\log_2(2{m'})\rceil}M_{m-{m'}}(q)\leq 2^{\lceil\log_2(2{m'})\rceil}\frac{q^{m-{m'}+1}-1}{q-1}
<2^{\lceil\log_2(2{m'})\rceil}\frac{q^{m-{m'}+1}}{q-1}
\end{eqnarray*}
and hence $q^{{m'}-1}(q-1)<2^{\lceil\log_2(2{m'})\rceil}$. An immediate computation gives ${m'}=0$ if $q>2$, and ${m'}\leq 3$ if $q=2$.~$_\blacksquare$

\smallskip

For $q>2$ and $m'=0$, and for $q=2$ and $m'\in \{0,1,2,3\}$, we see that $2^{\lceil\log_2(2m')\rceil}=2^{m'}$. Therefore, in view of Claim~$2$, we will replace $2^{\lceil\log_2(2m')\rceil}$ by $2^{m'}$ in the formula for $L_{m,q}(m',\wp)$.

\smallskip

\noindent\textsc{Claim~$3$. }For every two distinct $i,j\in \{1,\ldots,\ell\}$, we have $(q^{d_i}-\varepsilon_i,q^{d_j}-\varepsilon_j)=1$. 

\smallskip

\noindent We argue by contradiction and we assume that there exist $i,j\in \{1,\ldots,\ell\}$ with $s=(q^{d_i}-\varepsilon_i,q^{d_j}-\varepsilon_j)>1$. Observe that since $q$ is even, we have $s\geq 3$. 

Let $I=\{i\in \{1,\ldots,\ell\}\mid \varepsilon_i=-1\}$ and observe that, by Claim~$1$, the elements $(d_i)_{i\in I}$ are distinct. Hence we obtain

\begin{eqnarray*}\label{eq1}\nonumber
M&=&
L_{m,q}({m'},\wp)\leq \frac{2^{m'}}{s}\prod_{i\in  I}(q^{d_i}+1)\prod_{i\notin I}(q^{d_i}-1)\\
&\leq&
\frac{2^{m'}}{s} 
\prod_{i\in I}q^{d_i}\prod_{i\in  I}\left(1+\frac{1}{q^{d_i}}\right)\prod_{i\notin
  I}q^{d_i}=\frac{2^{m'}}{s}q^{m-m'}\prod_{i\in  I}\left(1+\frac{1}{q^{d_i}}\right)\\
&<&
\frac{2^{m'}}{s}q^{m-m'}\prod_{k=1}^\infty\left(1+\frac{1}{q^k}\right).\nonumber 
\end{eqnarray*}
Since $\log(1+x)\leq x$ for $x\geq 0$, we have
\begin{eqnarray*}
\log\left(\prod_{k=1}^\infty\left(1+\frac{1}{q^k}\right)\right)
&=&\sum_{k=1}^\infty\log\left(1+\frac{1}{q^k}\right)\leq\sum_{k=1}^\infty\frac{1}{q^k}=\frac{1}{q-1}. 
\end{eqnarray*}
Thus $M< (2^{m'}q^{m-m'}/s)\exp(1/(q-1))$. Moreover,  as $\exp(y)\leq 1+2y$ (which is valid for $0\leq y\leq 1$), we get $\exp(1/(q-1))\leq 1+2/(q-1)$ and hence
$$M<2^{m'}q^{m-m'}\left(\frac{1}{s}+\frac{2}{s(q-1)}\right).$$
By~\eqref{eq00} and by Lemma~\ref{estimate} we have $M>q^m$, and hence we  get
$$q^{{m'}}<2^{m'}\left(\frac{1}{s}+\frac{2}{s(q-1)}\right).$$
Now a computation (using $s\geq 3$) shows that this inequality is never satisfied.~$_\blacksquare$
 
\smallskip

Claim~$3$ shows that $M$ is simply the product $2^{m'}\prod_{i=1}^\ell(q^{d_i}-\varepsilon_i)$. Write $$I_-=\{d_i\mid i\in \{1,\ldots,\ell\}, \varepsilon_i=-1\}\quad \textrm{and}\quad I_+=\{d_i\mid i\in \{1,\ldots,\ell\},\varepsilon_i=1\}.$$ Lemma~\ref{basic}~(iii) yields $(d_i)_2\neq (d_j)_2$, for every two distinct elements $d_i,d_j\in I_-$. We use this remark frequently in the rest of the proof.

\smallskip

\noindent\textsc{Claim~$4$. }By replacing $\wp$ (if necessary), we may assume that $d$ is  a power of $2$ for every $d\in I_{-}$.

\smallskip

\noindent From Claim~$3$ and Lemma~\ref{basic}, the $2$-powers $((d)_2)_{d\in I_{-}}$ are pair-wise distinct and $(x)_2 \leq(y)_2$ for each $x\in I_{+}$ and $y\in I_{-}$. Let $\wp'$ be the signed partition of $m-{m'}$ obtained from $\wp$ by replacing the elements $(d^{-1})_{d\in I_{-}}$ with the summands in the $2$-adic expansion of $(\sum_{d\in I_{-}}d)$ and assigning sign $-1$ to each of these parts. Lemma~\ref{basic} and a moment's thought give that $L(m',\wp')=2^{m'}\prod_{d'^{\varepsilon'}\in\wp'}(q^{d'}-\varepsilon_{d'})$. Moreover, Lemma~\ref{babylonians} gives $M=L({m'},\wp)\leq L({m'},\wp')$. Hence we may replace $\wp$  with $\wp'$.~$_\blacksquare$

\smallskip

\noindent\textsc{Claim~$5$. }If $I_+=\emptyset$ and $|I_-|\geq 2$, then $M=M_m(q)$.

\smallskip

\noindent  Write $I_-=\{2^{x_1},\ldots,2^{x_t}\}$ with $x_1<\cdots<x_t$. As $I_+=\emptyset$, we get that $m-m'=2^{x_1}+\cdots+2^{x_t}$ is the $2$-adic expansion of $m-m'$. Suppose that $m-m'$ is even, that is, $x_1>0$. Note that $(m-m')\geq 6$ because $t=|I_-|\geq 2$. By~\eqref{eq0}, we have 
\begin{eqnarray*}
M&=&q^{m'}(q^{2^{x_1}}+1)\cdots (q^{2^{x_t}}+1)=q^m\prod_{i=1}^t\left(1+\frac{1}{q^{2^{x_i}}}\right)\leq q^m\prod_{j=0}^{x_t-x_1}\left(1+\frac{1}{(q^{2^{x_1}})^{2^j}}\right)\\
&=&q^mq^{2^{x_1}-2^{x_t+1}}\frac{q^{2^{x_t+1}}-1}{q^{2^{x_1}}-1}< q^mq^{2^{x_1}-2^{x_t+1}}\frac{q^{2^{x_t+1}}}{q^{2^{x_1}}-1}=q^m\frac{q^{2^{x_1}}}{q^{2^{x_1}}-1}.
\end{eqnarray*}

With $q$ being fixed, the function $x\mapsto q^{2^x}/(q^{2^{x}}-1)$ is decreasing with $x$. As $x_1>0$, we deduce that $M<q^m\cdot q^2/(q^2-1)$. Now, using the second statement in Lemma~\ref{estimate}, we get $M<M_{m}(q)$, which contradicts~\eqref{eq00}. Thus $m-m'$ is odd.

If $q>2$, then $m'=0$ from Claim~$2$, and hence $M=\prod_{i=1}^t(q^{2^{x_i}}+1)$. Observe now that $2^{x_1}+\cdots +2^{x_t}$ is the $2$-adic expansion of $m$. Since $m$ is odd, we see from Definition~\ref{def} (case $m$ odd and $q>2$) that $M$ equals $M_m(q)$. 

Suppose that $q=2$. We study separately three cases:
\begin{description}
\item[(i)]$m-m'=2^\ell-1$, for some $\ell\geq 1$;
\item[(ii)]$m-m'=2^\ell+2^{\ell-1}-1$, for some $\ell\geq 1$;
\item[(iii)]$m-m'\notin\{ 2^{\ell}-1,2^{\ell}+2^{\ell-1}-1\}$, for every $\ell\geq 1$.
\end{description}
Here we use~\eqref{eqdef} in Definition~\ref{def} and  we refer to each of its six lines as~$(1.1),\ldots,(1.6)$,  respectively.

Assume that $m-m'=2^\ell-1$, for some $\ell\geq 1$. Then $I_-=\{1,\ldots,2^{\ell-1}\}$ and $M=q^{m'}(q+1)\cdots (q^{2^{\ell-1}}+1)=q^{m'}(q^{2^\ell}-1)$. As $m'\leq 3$, by Lemma~\ref{comp} we may assume $2^{\ell}-1=m-m'>5$, that is, $\ell\geq 3$. From this it follows that $\ell$ is the largest positive integer with $2^\ell-1\leq m$. Now, we see from Definition~\ref{def}~$(1.1)$ that $M=M_m(q)$. 

Assume that $m-m'= 2^\ell+2^{\ell-1}-1$, for some $\ell\geq 1$.  As $m'\leq 3$, by Lemma~\ref{comp} we may assume $2^{\ell}+2^{\ell-1}-1=m-m'>5$, that is, $\ell\geq 3$. From this it follows that $\ell$ is the largest positive integer with $2^\ell-1\leq m$.

As the $2$-adic expansion of $m-m'=2^\ell+2^{\ell-1}-1$ is $1+2+\cdots +2^{\ell-2}+2^{\ell}$, we have $I_-=\{1,2,\ldots,2^{\ell-2},2^{\ell}\}$ and $M=q^{m'}((q+1)\cdots (q^{2^{\ell-2}}+1))(q^{2^\ell}+1)=q^{m'}(q^{2^\ell}+1)(q^{2^{\ell-1}}-1)$. If $m'=0$, then $M=M_m(q)$ by Definition~\ref{def}~$(1.4)$. Assume that $m'=1$. Then $m=2^{\ell}+2^{\ell-1}$ and hence $M_m(q)=(q^{2^{\ell-1}+1}-1)(q^{2^{\ell}}-1)$ by Definition~\ref{def}~$(1.5)$. Now a quick computation (using $\ell\geq 3$) shows that $M<M_m(q)$, which contradicts~\eqref{eq00}. Assume that $m'=2$. Then $m=2^{\ell}+2^{\ell-1}+1$ and hence $M_m(q)=q(q^{2^{\ell-1}+1}-1)(q^{2^{\ell}}-1)$ by Definition~\ref{def}~$(1.6)$. Another quick computation (using $\ell\geq 3$) shows that $M<M_m(q)$, which contradicts~\eqref{eq00}.  Assume that $m'=3$. Then $m=2^{\ell}+2^{\ell-1}+2$ and $M_m(q)=(q^{2^{\ell-1}+3}-1)(q^{2^{\ell}}-1)$  by Definition~\ref{def}~$(1.5)$. Another computation (using $\ell\geq 3$) shows that $M<M_m(q)$, which contradicts~\eqref{eq00} again. 

It remains to consider the case that $q=2$ and $m-m'\notin\{ 2^\ell-1,2^\ell+2^{\ell-1}-1\}$, for every $\ell\geq 1$. Observe that this means that, there exists $x\in \{1,\ldots, x_t-2\}$ with $2^x\notin I_-$, that is, $2^x$ is not a summand of the $2$-adic expansion $2^{x_1}+\cdots +2^{x_t}$ of $m-m'$. Suppose that $m'<3$ and  let $\wp'$ be the signed partition obtained from $\wp$ by adding $(2^x)^{-1}$ and $(2^{x_t}-2^x-1)^1$ and by removing $(2^{x_t})^{-1}$. Observe that $\wp'$ is a signed partition of $m-m'-1$. Now, it is easy to verify that $q^{2^{x_t}}+1<q(q^{2^{x}}+1)(q^{2^{x_t}-2^x-1}-1)$. From this, using $0<x\leq x_t-2$, $m'<3$ and Lemma~\ref{basic}, with a computation we find that $M=L_{m,q}(m',\wp)<L_{m,q}(m'+1,\wp')$, contradicting the maximality to $M$. Suppose then that $m'=3$.  Let $\wp'$ be the signed partition obtained from $\wp$ by adding $(2^x)^{-1}$ and $(2^{x_t}-2^x+1)^1$ and by removing $(2^{x_t})^{-1}$. Observe that $\wp'$ is a signed partition of $m-m'+1$. Now, it is easy to verify that $q(q^{2^{x_t}}+1)<(q^{2^{x}}+1)(q^{2^{x_t}-2^x+1}-1)$. From this, using $0<x\leq x_t-2$, $m'=3$ and Lemma~\ref{basic}, with a computation we find that $M=L_{m,q}(m',\wp)<L_{m,q}(m'-1,\wp')$, contradicting again the maximality to $M$.~$_\blacksquare$

\smallskip

\noindent\textsc{Claim~$6$. }If $|I_-|=1$ and $I_+=\emptyset$, then $M=M_m(q)$.

\smallskip

\noindent As $I_+=\emptyset$, we have $I_{-}=\{m-m'\}$ and $M=q^{m'}(q^{m-m'}+1)$. Suppose that $q>2$. Then $m'=0$ by Claim~$2$, and hence $m$ is a power of $2$ by Claim~$4$. If $m\in \{1,2\}$, then $M=q^m+1=M_{m}(q)$. If $m>2$, then  Definition~\ref{def} (case $m\geq 4$ even and $q>2$) gives $M_{m}(q)=(q^{m/2+1}-1)(q^{m/2}-1)/(q-1)$. Now a computation (using $m\geq 4$) shows that $(q^{m/2+1}-1)(q^{m/2}-1)/(q-1)>q^m+1=M$, however this contradicts~\eqref{eq00}.

Suppose that $q=2$. By Claim~$4$, we see that $m-m'$ is a power of $2$, say $m-m'=2^\ell$. As $m'\leq 3$, by Lemma~\ref{comp} we may assume that $2^\ell=m-m'>5$, that is, $\ell\geq 3$. From this it follows that $2^\ell$ is the largest power of $2$ with $2^\ell-1\leq m$. If  $m'\leq 2$, then  $M_m(q)=q^{m'+1}(q^{m-m'}-1)$ by Definition~\ref{def}~$(1.1)$. Now $M_m(q)>M=q^{m'}(q^{m-m'}+1)$, contradicting~\eqref{eq00}. If $m'=3$, then $m=2^\ell+3=(2^\ell-1)+4$ and hence $M_m(q)=q(q^{2^{\ell-1}+3}-1)(q^{2^{\ell-1}}-1)$ if $\ell\geq 4$ (by Definition~\ref{def}~$(1.3)$) and $M_m(q)=(q^8+1)(q^4-1)$ if $\ell=3$ (by Definition~\ref{def}~$(1.4)$). In both cases a computation shows that $M_m(q)>M$, contradicting~\eqref{eq00}.~$_\blacksquare$

\smallskip

In view of Claims~$5$ and~$6$, we may assume $I_+\neq\emptyset$. In spirit, the rest of the proof is similar to the proof of Claims~$5$ and~$6$. The main major difference is that it requires (unfortunately) more subcases and slightly more detailed computations. 

\smallskip

\noindent\textsc{Claim~$7$. }We have $|I_+|=1$. Moreover, for $q=2$, either $m'=0$, or $m'=1$ and the element of $I_+$ is odd.

\smallskip

\noindent  Suppose that $q>2$. If $d$ and $d'$ are two distinct elements of $I_{+}$, then $(q^{d}-1,q^{d'}-1)$ is divisible by $q-1>1$, which contradicts Claim~$3$.  Thus $|I_+|=1$.

Suppose that $q=2$ and write $d=\sum_{x\in I_+}x$. 
Assume that $d$ is odd and that $m'\geq 2$. Let $\wp'$ be the signed partition of $m-m'+2$ obtained from $\wp$ by removing the parts $(x^1)_{x\in I_+}$ and by adding $(d+2)^1$. Observe that $q^2\prod_{x\in I_+}(q^x-1)<q^{d+2}-1$. Now, using Lemma~\ref{basic} we get $L_{m,q}(m'-2,\wp')=q^{m'-2}(q^{d+2}-1)\prod_{y\in I_-}(q^y+1)$, from which it follows that $M=L_{m,q}(m',\wp)<L_{m,q}(m'-2,\wp')$. However, this contradicts the maximality of $M$. 

Assume that $d$ is even and that $m'\geq 1$. Let $\wp'$ be the signed partition of $m-m'+1$ obtained from $\wp$ by removing the parts $(x^1)_{x\in I_+}$ and by adding $(d+1)^1$. Observe that $q\prod_{x\in I_+}(q^x-1)<q^{d+1}-1$. Now, using Lemma~\ref{basic} we get $L_{m,q}(m'-1,\wp')=q^{m'-1}(q^{d+1}-1)\prod_{y\in I_-}(q^y+1)$, from which it follows that $M=L_{m,q}(m',\wp)<L_{m,q}(m'-1,\wp')$. However, this contradicts the maximality of $M$. 

Summing up, we have shown that either $d$ is odd and $m'\in \{0,1\}$, or $d$ is even and $m'=0$. In particular, to conclude the proof of this claim it suffices to show that $I_+=\{d\}$. We argue by contradiction and we suppose that $|I_+|\geq 2$. 

Assume that $d$ is odd. Let $\wp'$ be the signed partition of $m-m'$ obtained from $\wp$ by removing the parts $(x^1)_{x\in I_+}$ and by adding $(d)^1$. Observe that $\prod_{x\in I_+}(q^x-1)<q^{d}-1$. Now, using Lemma~\ref{basic} we get $L_{m,q}(m',\wp')=q^{m'}(q^{d}-1)\prod_{y\in I_-}(q^y+1)$, from which it follows that $M=L_{m,q}(m',\wp)<L_{m,q}(m',\wp')$. However, this contradicts the maximality of $M$. 

Assume that $d$ is even. Recall that $m'=0$.  Let $\wp'$ be the signed partition of $m-1$ obtained from $\wp$ by removing the parts $(x^1)_{x\in I_+}$ and by adding $(d-1)^1$. Observe that $\prod_{x\in I_+}(q^x-1)<q(q^{d-1}-1)$ because $|I_+|\geq 2$. Now, using Lemma~\ref{basic} we get $L_{m,q}(m',\wp')=q(q^{d-1}-1)\prod_{y\in I_-}(q^y+1)$, from which it follows that $M=L_{m,q}(m',\wp)<L_{m,q}(1,\wp')$. However, this contradicts again the maximality of $M$.~$_\blacksquare$ 

\smallskip

We denote by $d_+$ the element of $I_+$, that is, $I_+=\{d_+\}$. For  $q=2$, we have $q-1=1$ and 
$ q^2-1=q+1$, and hence (by eventually replacing $\wp$ with the signed partition obtained from $\wp$ by removing $2^1$ and by adding $1^{-1}$) we may assume that $d_+> 2$. Observe that in view of Claims~$2$ and~$7$ (at this stage of the proof) we have $m'=0$ if $q>2$, and $m'\leq 1$ if $q=2$.

\smallskip

\noindent\textsc{Claim~$8$. }$I_-\neq\emptyset$.

\smallskip

\noindent If $I_{-}=\emptyset$, then $d_+=m-m'$ and $M=q^{m'}(q^{m-m'}-1)<q^{m}< M_m(q)$ by Lemma~\ref{estimate}, which contradicts~\eqref{eq00}. Thus $I_{-}\neq \emptyset$.~$_\blacksquare$ 

\smallskip

Write $I_-=\{2^{x_1},\ldots,2^{x_t}\}$, with $x_1<\cdots<x_t$. Observe that $(d_+)_2\leq 2^{x_1}$ by Lemma~\ref{basic}. 

\smallskip

\noindent\textsc{Claim~$9$. }$d_+$ is odd.

\smallskip

\noindent We argue by contradiction and we suppose that $d_+$ is even. In particular, $x_1>0$ because $2\leq (d_+)_2\leq (2^{x_1})_2$ by Claim~$3$ and Lemma~\ref{basic}~(ii). Assume that $d_+>2$. Let $\wp'$ be the signed partition of $m-m'$ obtained from $\wp$ by removing $d_+^1$ and by adding  $1^{-1}$ and $(d_+-1)^{1}$. An application of Lemma~\ref{basic} gives $L_{m,q}(m',\wp')=q^{m'}(q+1)(q^{d_+-1}-1)\prod_{y\in I_-}(q^{y}+1)$. Moreover, as $1<d_+-1$, we get $q^{d_+}-1<(q+1)(q^{d_+-1}-1)$ and $M=L_{m,q}(m',\wp)<L_{m,q}(m',\wp')$, which contradicts the maximality of $M$. 

Assume that $d_+=2$. So $q>2$, and $m'=0$ by Claim~$2$. Let $s$ be the largest non-negative integer with $x_i=i$, for every $i\in \{1,\ldots,s\}$. (For instance, $s=0$ when $x_1>1$, and $s=1$ when $x_1=1$ and either $x_2>2$ or $t=1$.) Let $\wp'$ be the signed partition of $m$ obtained from $\wp$ by removing $2^1$ and $((2^i)^{-1})_{i\in \{1,\ldots,s\}}$ and by adding  $(2^{s+1})^{-1}$. Observe that this is well-defined because $2+(2^1+2^2+\cdots +2^{s})=2^{s+1}$. Moreover, by the maximality of $s$, we get either $s=t$ or $x_{s+1}>s+1$. In both cases, $2^{s+1}\notin I_-$, and hence an application of Lemma~\ref{basic} gives $L_{m,q}(0,\wp')=(q^{2^{s+1}}+1)\prod_{i=s+1}^t(q^{2^{x_i}}+1)$. Furthermore, as $$(q^{2}-1)(q^2+1)\cdots (q^{2^s}+1)=q^{2^{s+1}}-1<q^{2^{s+1}}+1,$$ we
have  $M=L_{m,q}(0,\wp)<L_{m,q}(0,\wp')$, which contradicts again the maximality of $M$. This final contradiction shows that $d_+$ must be odd.~$_\blacksquare$

\smallskip

\noindent\textsc{Claim~$10$. }$1\in I_-$, that is, $x_1=0$.

\smallskip

\noindent Suppose that $1\notin I_-$. In particular, no element in $I_-$ is odd. Let $\wp'$ be the signed partition obtained from $\wp$ by replacing $d_+^{1}$ with $d_+^{-1}$. Since $d_+$ is odd, Lemma~\ref{basic}~(iii) gives $L_{m,q}(m',\wp')=q^{m'}(q^{d_+}+1)\prod_{y\in I_-}(q^y+1)$. Moreover, as $q^{d_+}-1<q^{d_+}+1$, we obtain $L_{m,q}(m',\wp')>L_{m,q}(m',\wp)=M$, which contradicts the maximality of $M$. Thus $1\in I_-$ and $x_1=0$.~$_\blacksquare$  

\smallskip

\noindent\textsc{Claim~$11$. }If $|I_-|\geq 2$, then $M=M_m(q)$.

\smallskip

\noindent  Suppose that $d_+\leq 2^{x_t}$ and write $d_+'=d_++2^{x_t}$. Observe that $(d_+')_2=(d_+)_2=1$ because $d_+$ is odd and $x_t>x_1=0$. Let $\wp'$ be the signed partition obtained from $\wp$ by removing $d_+^1$ and $(2^{x_t})^{-1}$ and by adding $(d_+')^1$. As $(q^{d_+}-1)(q^{2^{x_t}}+1)<q^{d_+'}-1$, the usual application of Lemma~\ref{basic} gives $$L_{m,q}(m',\wp')=q^{m'}(q^{d_+'}-1)\prod_{\substack{y\in I_-\\y\neq 2^{x_t}}}(q^y+1)>L_{m,q}(m',\wp)=M,$$ which is a contradiction. Thus 
\begin{equation}\label{eqnew1}
2^{x_t}\leq d_+.
\end{equation}

Let $x\geq 0$ with $2^{x+1}\leq d_+$. Suppose that $2^x\notin I_-$.  Let $\wp'$ be the signed partition obtained from $\wp$ by removing $d_+^1$  and by adding $(d_+-2^x)^1$ and $(2^x)^{-1}$. As $q^{d_+}-1<(q^{d_+-2^x}-1)(q^{2^x}+1)$, Lemma~\ref{basic} gives $L_{m,q}(m',\wp')=q^{m'}(q^{d_+-2^x}-1)(q^{2^x}+1)\prod_{y\in I_-}(q^y+1)>L_{m,q}(m',\wp)=M$, which is a contradiction. Thus $2^{x}\in I_-$. Therefore
\begin{equation}\label{eqnew2}
2^x\in I_-\qquad\textrm{ for every }x\geq 0\textrm{ with }2^{x+1}\leq d_+.
\end{equation}

Let $\ell$ be the largest integer with $2^{\ell}\leq d_+$. From~\eqref{eqnew2}, we have $1,2,\ldots,2^{\ell-1}\in I_-$. Now, combining~\eqref{eqnew1} and~\eqref{eqnew2}, we get that  either
\begin{description}
\item[(i)]$x_t={\ell-1}$ and $I_-=\{1,2,\ldots,2^{\ell-1}\}$, or
\item[(ii)]$x_t=\ell$ and $I_-=\{1,2,\ldots,2^\ell\}$.
\end{description}

To discuss these two possibilities we subdivide the proof depending on whether $q>2$ or $q=2$. Suppose first that $q>2$. In particular, $m'=0$. Assume~(i), that is, $x_t={\ell-1}$ and $I_-=\{1,2,\ldots,2^{\ell-1}\}$. Thus $M=(q^{d_+}-1)\prod_{i=0}^{\ell-1}(q^{2^i}+1)=(q^{d_+}-1)(q^{2^\ell}-1)/(q-1)$. Moreover, $m=d_++1+2+\cdots +2^{\ell-1}=d_++2^{\ell}-1$. Since $d_+$ is odd, we see that $m$ is even. As $\ell$ is the largest integer with $2^\ell\leq d_+$, from an easy computation, we see that  $\ell$ is also the largest integer with  $2^{\ell-1}+2^{\ell}\leq m$. Now, Definition~\ref{def} (case $m\geq 4$ even and $q>2$) gives $M=M_m(q)$. Assume~(ii), that is, $x_t=\ell$ and $I_-=\{1,2,\ldots,2^\ell\}$. Thus $M=(q^{d_+}-1)\prod_{i=0}^{\ell}(q^{2^i}+1)=(q^{d_+}-1)(q^{2^\ell+1}-1)/(q-1)$. Moreover, $m=d_++1+2+\cdots +2^{\ell}=d_++2^{\ell+1}-1$. Since $d_+$ is odd, $m$ is even. As $\ell$ is the largest integer with $2^\ell\leq d_+$ and as $d_+$ is odd, we deduce that $\ell+1$ is the largest integer with  $2^{\ell}+2^{\ell+1}\leq m$. So, Definition~\ref{def} (case $m\geq 4$ even and $q>2$) gives again $M=M_m(q)$.

Suppose that $q=2$. Assume~(i), that is, $x_t=\ell-1$ and $I_-=\{1,2,\ldots,2^{\ell-1}\}$. Thus $M=q^{m'}(q^{d_+}-1)(q^{2^\ell}-1)$ and $m=m'+d_++2^\ell-1$. As $\ell$ is the largest integer with $d_+\geq 2^\ell$, with an easy computation we see that $\ell+1$ is the largest integer with $2^{\ell+1}-1\leq m$. Write $m_0=m-(2^{\ell+1}-1)=m'+d_+-2^\ell$. Observe that $m_0\leq 2^\ell$ because $d_+<2^{\ell+1}$ and $m'\leq 1$ by Claims~$2$ and~$7$. Now, Definition~\ref{def}~(1.1),~(1.2),~(1.3) and~(1.4) gives
\begin{equation}\label{long}
M_m(q)=\left\{
\begin{array}{lcl}
q^{m_0}(q^{2^{\ell+1}}-1)&&\textrm{if }m_0\leq 3,\\
(q^{m_0+2^\ell}-1)(q^{2^{\ell}}-1)&&\textrm{if }3<m_0<2^\ell \textrm{ and }m_0 \textrm{ is odd},\\
q(q^{m_0+2^\ell-1}-1)(q^{2^{\ell}}-1)&&\textrm{if }3<m_0<2^\ell \textrm{ and }m_0 \textrm{ is even},\\
(q^{2^\ell+1}-1)(q^{2^{\ell}}-1)&&\textrm{if } 3<m_0=2^\ell.
\end{array}
\right.
\end{equation}
If $m_0\leq 3$ or if $m_0=2^\ell$, then a direct computation shows that $M<M_m(q)$, contradicting~\eqref{eq00}. As $d_+$ is odd, $m_0$ is odd if and only if $m'=0$, and $m_0$ is even if and only if $m'=1$. Thus from~\eqref{long} we get that 
\[
M_m(q)=\left\{
\begin{array}{lcl}
(q^{d_+}-1)(q^{2^\ell}-1)&&\textrm{if }m'=0,\\
q(q^{d_+}-1)(q^{2^\ell}-1)&&\textrm{if }m'=1,
\end{array}
\right.
\]
which is exactly $M$.

Assume~(ii), that is, $x_t=\ell$ and $I_-=\{1,2,\ldots,2^\ell\}$. Thus $M=q^{m'}(q^{d_+}-1)(q^{2^{\ell+1}}-1)$ and $m=m'+d_++2^{\ell+1}-1$. As $\ell$ is the largest integer with $d_+\geq 2^\ell$, with an easy computation we see that either $\ell+1$ is the largest integer with $2^{\ell+1}-1\leq m$, or $(d_+,m')=(2^{\ell+1}-1,1)$. In the second case we have $m=2^{\ell+2}-1$ and $M_m(q)=q^{2^{\ell+2}}-1$ by Definition~\ref{def}~$(1.1)$. Now a computation shows that $M<M_m(q)$, contradicting~\eqref{eq00}. In the first case, write $m_0=m-(2^{\ell+1}-1)=m'+d_+$ and observe that $m_0>2^\ell$ because $d_+\geq 2^\ell$ and $d_+$ is odd. Thus Definition~\ref{def}~$(1.5)$ and~$(1.6)$ gives
\[
M_m(q)=\left\{
\begin{array}{lcl}
(q^{m_0}-1)(q^{2^\ell+1}-1)&&\textrm{if }m_0 \textrm{ is odd},\\
q(q^{m_0-1}-1)(q^{2^\ell+1}-1)&&\textrm{if }m_0 \textrm{ is even}.
\end{array}
\right.
\]
Finally, as $m_0$ is odd if and only if $m'=0$, and $m_0$ is even if and only if $m'=1$, we get $M=M_m(q)$.~$_\blacksquare$

\smallskip

In view of Claims~$8$ and~$11$ there is only one more case to consider: the case $|I_-|=1$. By Claim~$10$, we have $I_-=\{1\}$, and hence $d_+=m-m'-1$ and $M=q^{m'}(q^{m-m'-1}-1)(q+1)$. Recall that $d_+=m-m'-1$ is odd by Claim~$9$. 

Suppose that $m-m'>5$. Let $\wp'$ be the signed partition $(1^{-1},2^{-1},(m-m'-3)^{1})$ of $m-m'$. As $m-m'-3$ is odd, Lemma~\ref{basic} gives $L_{m,q}(m',\wp')=q^{m'}(q^{m-m'-3}-1)(q+1)(q^2+1)$. Now a direct computation using $m-m'>5$ gives $L_{m,q}(m',\wp')>L_{m,q}(m',\wp)=M$, contradicting the maximality of $M$. In particular, we may assume that $m-m'\leq 5$. Moreover, by Lemma~\ref{comp} we may also assume $q>2$. In this case, as $m'=0$, we have $m\leq 5$ and $M=(q^{m-1}-1)(q+1)$. Since $d_+=m-1$ is odd, $m$ is even. Therefore to conclude it suffices to check the values of $M_2(q)$ and $M_4(q)$. Now, $M_2(q)=q^2+1>q^2-1=M$, contradicting~\eqref{eq00}, and $M_4(q)=(q^3-1)(q+1)=M$. The proof is now complete.
\end{proof}

\thebibliography{10}
\bibitem{Andrews}G.~E.~Andrews, Euler's ``De Partition Numerorum'' , \textit{Bull. Amer. Math. Soc. }\textbf{44} (2007), 561--573.

\bibitem{magma}W.~Bosma, J.~Cannon, C.~Playoust, The Magma algebra system. I. The user language, \textit{J.
Symbolic Comput.} \textbf{24} (1997), 235--265.

\bibitem{BC}A.~A.~Buturlakin, M.~A.~Grechkoseeva, The cyclic structure
  of maximal tori in finite classical groups, \textit{Algebra and
    Logic} \textbf{46} (2007), 73--89.

\bibitem{ATLAS}J.~H.~Conway, R.~T.~Curtis, S.~P.~Norton, R.~A.~Parker,
  R.~A.~Wilson, \textit{Atlas of finite groups}, Clarendon Press, Oxford, 1985.

\bibitem{GLS}
D.~Gorenstein, R.~Lyons, R.~Solomon, \emph{The classification
  of the finite simple groups. number 3. part I. chapter A},  \textbf{40}
  (1998), xvi+419.

\bibitem{DGPS}S.~Guest, J.~Morris, C.~Praeger, P.~Spiga, On the maximum orders of elements of finite almost simple groups and primitive permutation groups, {\tt 	arXiv:1301.5166 [math.GR]}.

\bibitem{H}B.~Huppert, Singer-Zylken in klassischen Gruppen, \textit{Math. Z.} \textbf{117} (1970), 141--150.

\bibitem{KS2}W.~M.~Kantor, \`{A}.~Seress, Prime power graphs for
  groups of Lie type, \textit{J. Algebra} \textbf{247} (2002),
  370--434. 

\bibitem{KS}W.~M.~Kantor, \'{A}.~Seress, Large element orders and the
  characteristic of Lie-type simple groups, \textit{J. Algebra}
  \textbf{322} (2009), 802--832.

\bibitem{Suzuki}M.~Suzuki, A new type of simple groups of finite
  order, \textit{Proc. Nat. Acad. Sci. U.S.A.} \textbf{46} (1960), 868--870.

\end{document}